\renewcommand*{\backref}[1]{}
\renewcommand*{\backrefalt}[4]{%
	\ifcase #1 %
	(No citations.)% use this if no citations
	\or
	(Cited on page #2.)% use this if only one citation
	\else
	(Cited on pages #2.)% use this if multiple citations
	\fi
}
\algrenewcommand\algorithmiccomment[1]{\hfill\textcolor{CornflowerBlue}{$\triangleright$ #1}}
\newcommand{\real}{\mathbb{R}}
\DeclareMathOperator{\diag}{diag}
\newcommand{\mat}[1]{\boldsymbol{#1}}
\renewcommand{\vec}[1]{\boldsymbol{#1}}
\newcommand{\norm}[1]{\mleft\| #1 \mright\|}
\newcommand{\QR}{\textsf{QR}\xspace}
\newcommand{\ULV}{\textsf{ULV}\xspace}
\newcommand{\LU}{\textsf{LU}\xspace}
\newcommand{\expmat}[1]{\begin{bmatrix} #1 \end{bmatrix}}
\newcommand{\twobytwo}[4]{\expmat{#1 & #2 \\ #3 & #4}}
\newcommand{\twobyone}[2]{\expmat{#1 \\ #2}}
\newcommand{\Id}{\mathbf{I}}
\newcommand{\order}{\mathcal{O}}
\DeclareMathOperator{\fl}{fl}
\DeclareMathOperator*{\argmin}{argmin}
\newcommand{\e}{\mathrm{e}}
\renewcommand{\hat}[1]{\widehat{#1}}
\renewcommand{\tilde}[1]{\widetilde{#1}}
\newcommand{\actionbox}[1]{\begin{tcolorbox}[colback=white,colframe=black,width=\columnwidth,boxsep=5pt,arc=4pt]
    #1
\end{tcolorbox}}
\DeclareMathOperator{\cond}{cond}
\DeclareMathOperator{\err}{err}
\newcommand{\myparagraph}[1]{\vspace{0.3em}

\textit{\textbf{#1}}}
\crefname{hypothesis}{Hypothesis}{Hypotheses}
\title{Stable Algorithms for General Linear Systems by Preconditioning the Normal Equations\thanks{Date: March 4, 2025.
\funding{ENE acknowledges support from the DOE computational science graduate fellowship under grant DE-SC0021110 and, under aegis of Joel Tropp, the Office of Naval Research through BRC Award N000142412223 and the Carver Mead New Horizons Fund.
YN is supported by the EPSRC grant EP/Y030990/1.
}}}
\author{Ethan N. Epperly\thanks{Department of Computing and Mathematical Sciences, California Institute of Technology, Pasadena, CA 91125 USA (\email{eepperly@caltech.edu}, \url{https://ethanepperly.com}).} \and Anne Greenbaum\thanks{Department of Applied Mathematics, University of Washington, Seattle, WA 98195 USA (\email{greenbau@uw.edu}).} \and Yuji Nakatsukasa\thanks{Mathematical Institute, University of Oxford, Oxford, OX2 6GG, UK (\email{nakatsukasa@maths.ox.ac.uk}).}}
\newcommand{\ignore}[1]{}
\begin{document}

\maketitle

\begin{abstract}
    This paper studies the solution of nonsymmetric linear systems by preconditioned Krylov methods based on the normal equations, LSQR in particular.
    On some examples, preconditioned LSQR is seen to produce errors many orders of magnitude larger than classical direct methods; this paper demonstrates that the attainable accuracy of preconditioned LSQR can be greatly improved by applying iterative refinement or restarting when the accuracy stalls.
    This observation is supported by rigorous backward error analysis.
    This paper also provides a discussion of the relative merits of GMRES and LSQR for solving nonsymmetric linear systems, demonstrates stability for left-preconditioned LSQR \emph{without iterative refinement}, and shows that iterative refinement can also improve the accuracy of preconditioned conjugate gradient.
\end{abstract}

\begin{keywords}
    linear system, preconditioning, numerical stability, backward stability, LSQR
\end{keywords}

\begin{MSCcodes}
    65F08, 65F10
\end{MSCcodes}

\section{Introduction}

As Saad notes in his famous textbook, ``finding a good preconditioner to solve a given sparse linear system is often viewed as a combination of art and science'' \cite[p.~283]{Saa03}. 
While this quote speaks to preconditioning sparse linear systems, preconditioning for dense systems is an equally challenging task.

Since Saad wrote those words, the ``science'' of preconditioners has made significant strides.
Many of the advances involve randomization.
Preconditioners for graph Laplacian systems have grown from their roots in theoretical computer science \cite{ST04,KOSZ13,KS16} into practical tools for scientific computing \cite{CLB21,GKS23,LB12}.
These methods extend to general symmetric (weakly) diagonally dominant linear systems and, in theory at least, nonsymmetric systems as well \cite{Kyn17,FM18}.
Preconditioners based on randomized dimensionality reduction yield near-perfect preconditioners for highly overdetermined least-squares problems \cite{RT08,AMT10}.
A recent series of works have developed preconditioners with rigorous guarantees for solving symmetric positive definite systems with a small number of large eigenvalues \cite{FTU23,DEF+23,DMY24,COCF16}.
Even ``classical'' preconditioning strategies like algebraic multigrid have seen significant development, including the development of methods for difficult convection--diffusion problems with guaranteed convergence rates \cite{Not12}.

Given these exciting developments, one can imagine a future where preconditioned iterative solvers form a more integral part of programming environments such as MATLAB, numpy, and Julia.
One can imagine, for instance, that when a user of MATLAB types \texttt{A{\textbackslash}b}, the software automatically checks whether $\mat{A}$ is diagonally dominant and, if so, invokes a preconditioned iterative solver with a high-quality preconditioner computed by a randomized Cholesky or \LU decomposition.
To see this vision come to pass, or more modestly, to expand the scope of applications where we can safely use preconditioned iterative methods, it is important that preconditioned iterative methods are just as accurate and reliable as their direct counterparts.
This paper addresses this need, developing \emph{numerically stable} ways of using preconditioners to solve nonsymmetric linear systems.

\subsection{Our approach: Preconditioning the normal equations}

This paper considers the problem of solving a linear system of equations $\mat{A}\vec{x} = \vec{b}$ for a nonsingular matrix $\mat{A} \in \real^{n\times n}$ and vector $\vec{b} \in \real^n$.
We assume access to an excellent preconditioner $\mat{P}\in\real^{n\times n}$ which controls the condition number of $\mat{A}$ up to an absolute constant:
\begin{equation} \label{eq:good-prec}
    \cond(\mat{A}\mat{P}^{-1}) \le C.
\end{equation}
For this article, $\cond(\mat{B}) = \sigma_{\rm max}(\mat{B}) / \sigma_{\min}(\mat{B})$ denotes the spectral norm condition number.
Constructions for preconditioners were discussed above and will be discussed more in \cref{sec:why-preconditioned-lsqr}.

This article will investigate solving $\mat{A}\vec{x} = \vec{b}$ by \emph{preconditioned LSQR} (PLSQR), which is algebraically equivalent to conjugate gradient (CG) applied to a preconditioned version of the normal equations \cref{eq:pre-normal}.
Our focus on LSQR instead of GMRES is discussed below in \cref{sec:lsqr}.
Without modification, PLSQR is not stable.
To see this, we generate a random matrix $\mat{A}$ with condition number $\cond(\mat{A}) = 10^{10}$ and near-perfect preconditioner $\mat{P}^{-1}$ satisfying $\cond(\mat{A}\mat{P}^{-1}) = 4$; see \cref{sec:experiment-details} for details.
The results of applying PLSQR with this preconditioner and initial vector $\vec{x}_0 = \vec{0}$ are shown in the blue solid curves in \cref{fig:lsqr}; both the residual $\norm{\vec{b} - \mat{A}\smash{\vec{\hat{x}}}}$ and forward error $\norm{\vec{x} - \smash{\vec{\hat{x}}}}$ for PLSQR exceed the results for a direct solver (MATLAB's \texttt{mldivide}) by orders of magnitude.
Incidentally, we note that right-preconditioned GMRES (PGMRES) fails to meaningfully reduce the residual in 100 iterations with this preconditioner.

\begin{figure}[t]
    \centering
    \includegraphics[width=0.48\linewidth]{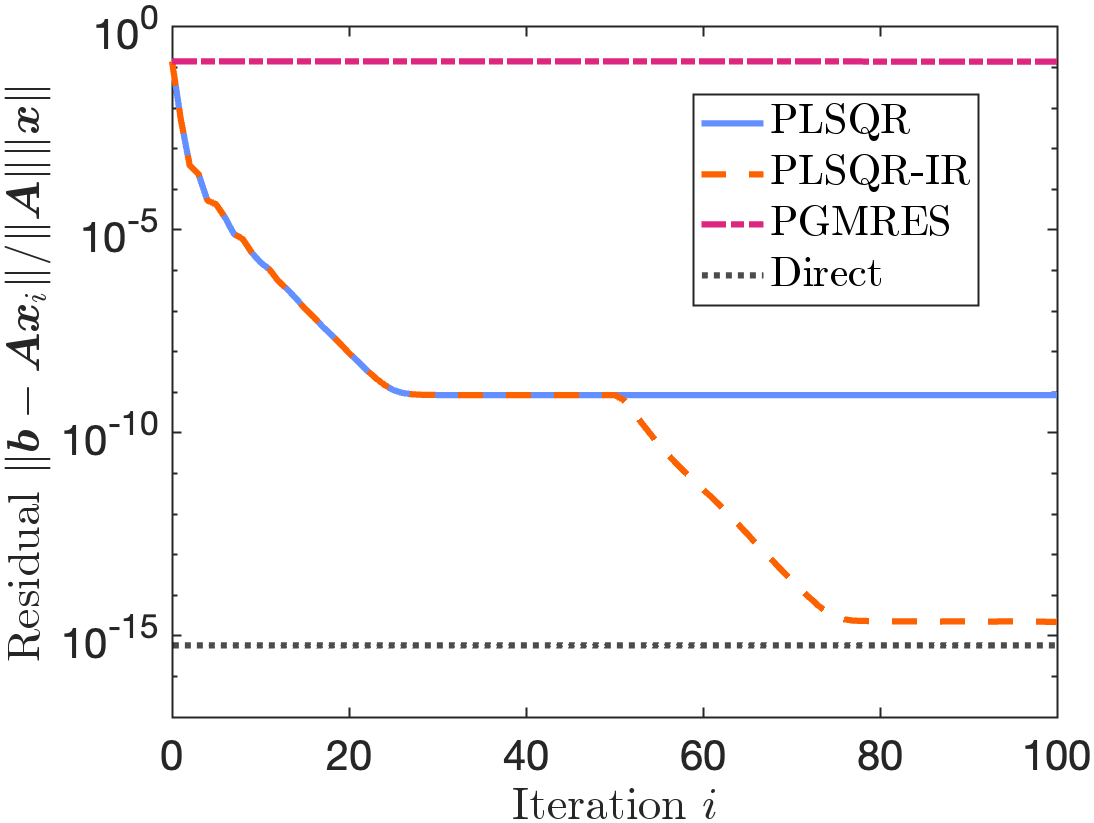}
    \includegraphics[width=0.48\linewidth]{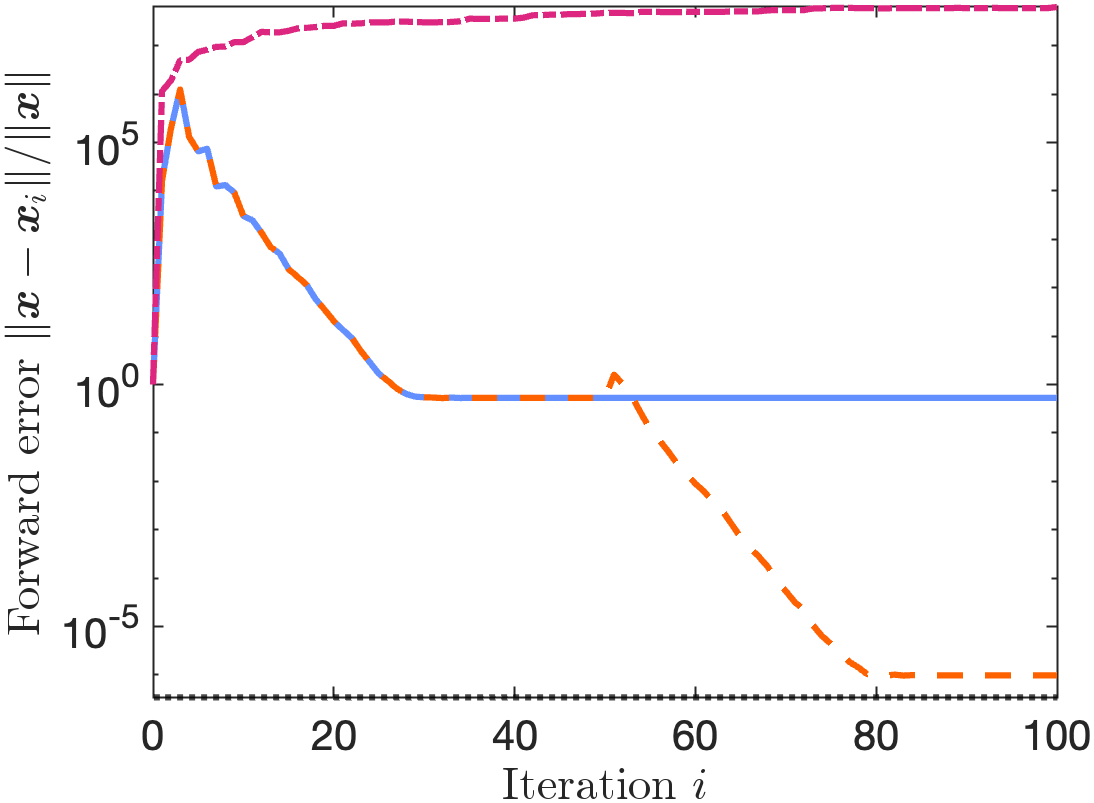}
    \caption{Residual (\emph{left}) and forward error (\emph{right}) for PLSQR and PGMRES with near-perfect preconditioner $\cond(\mat{A}\mat{P}^{-1}) = 4$.
    PGMRES fails to converge at all, and the error metrics for PLSQR exceed the direct method by orders of magnitude.
    The instabilities of PLSQR are fixed by a single step of iterative refinement (PLSQR-IR).}
    \label{fig:lsqr}
\end{figure}

This paper advocates a simple solution for the problem identified in \cref{fig:lsqr}: iterative refinement.
Suppose we run PLSQR and the residual $\norm{\vec{b} - \mat{A}\vec{x}_i}$ is observed to stagnate far above the level $\norm{\mat{A}}\norm{\vec{x}_i} u$ expected for a backward stable method; here, $\norm{\cdot}$ is the vector $\ell_2$ norm or matrix spectral norm and $u$ is the unit roundoff, of size $u\approx 10^{-16}$ in double precision.
To fix this problem, we apply (uniform precision) iterative refinement with solves done by PLSQR:
\begin{equation} \label{eq:PLSQR-IR}
    \begin{split}
    \vec{x}_1 &\gets \Call{PLSQR}{\mat{A},\mat{P},\vec{b}}, \\
    \vec{x}_2 &\gets \vec{x}_1+\Call{PLSQR}{\mat{A},\mat{P},\vec{b}-\mat{A}\vec{x}_1}.
    \end{split}
\end{equation}
We abbreviate \emph{preconditioned LSQR with iterative refinement} as PLSQR-IR.
The dashed orange curves in \cref{fig:lsqr} show results for PLSQR-IR, where each PLSQR step was individually run for 50 iterations.
We see that after the the iterative refinement step starts, the forward error and residual begin converging again until reaching machine accuracy.
We emphasize that, for difficult problems, more than one iterative refinement step may be necessary; see \cref{sec:impl-exp} for details.

\subsection{How accurate is enough?\nopunct}
The goal of this paper is to produce solutions to linear systems of equations that are, in a sense, as accurate as possible in the presence of floating point errors.
This concept is made precise by the notion of backward stability: A linear system solver is backward stable if the numerically computed solution $\vec{\hat{x}}$ is the exact solution to a machine-precision small perturbation of the system:
\begin{equation} \label{eq:backward-stable}
    (\mat{A} + \mat{\Delta A})\vec{\hat{x}} = \vec{b} \quad \text{with } \norm{\mat{\Delta A}} \le c u \norm{\mat{A}}.
\end{equation}
Here, $c > 0$ is a small prefactor, typically allowed to depend polynomially on the matrix dimension $n$.
A classical result of Rigal and Gaches \cite[Thm.~7.1]{Hig02} shows that the minimum-norm $\mat{\Delta A}$ in \cref{eq:backward-stable} satisfies
\begin{equation} \label{eq:rigal-gaches}
    \frac{\norm{\mat{\Delta A}}}{\norm{\mat{A}}} = \frac{\norm{\vec{b} - \mat{A}\vec{\hat{x}}}}{\norm{\mat{A}}\norm{\vec{\hat{x}}}}.    
\end{equation}
Thus, backward stability is equivalent to having a small residual $\norm{\vec{b}-\mat{A}\vec{\hat{x}}}$, rescaled by the norms of $\mat{A}$ and $\vec{\hat{x}}$.

In this paper, we show that, by combining PLSQR with iterative refinement, we can achieve any level of accuracy desired up to the level of backward stability.
For many applications, far less accuracy than backward stability is needed, and one can terminate when an appropriate error tolerance has been met.
We emphasize that, without iterative refinement, the errors achieved by PLSQR can be too large to be useful on some problems.
Indeed, the (relative) forward error of PLSQR saturates at 0.53 in \cref{fig:lsqr}, and just one step of iterative refinement improves the forward error by five orders of magnitude.
Therefore, the iterative refinement approach may be useful even in settings where a solution of modest accuracy is required.

\subsection{Our main theorem}

The gap between theory and practice for Krylov methods remains a chasm, and our theoretical understanding of preconditioned Krylov methods is even worse.
Despite the limited theoretical tools, we are able to \emph{prove} backward stability for a variant of the PLSQR-IR scheme.

Let us first set the stage.
LSQR treats the linear system of equations $\mat{A}\vec{x} = \vec{b}$ as a least-squares problem $\vec{x} = \argmin_{\vec{z} \in \real^n} \norm{\vec{b} - \mat{A}\vec{z}}$.
To precondition this system, we make a change of variables $\vec{y} = \mat{P}\vec{x}$: 
\begin{equation} \label{eq:pre-ls}
    \vec{y} = \argmin_{\vec{z}\in\real^n} \norm{\vec{b} - (\mat{A}\mat{P}^{-1})\vec{z}}, \quad \vec{x} = \mat{P}^{-1}\vec{y}.
\end{equation}
Now, we can convert the preconditioned system back to a linear system by forming the \emph{preconditioned normal equations}
\begin{equation} \label{eq:pre-normal}
    (\mat{P}^{-\top}\mat{A}^\top\mat{A}\mat{P}^{-1})\vec{y} = \mat{P}^{-\top}\mat{A}^\top\vec{b}.
\end{equation}
For good reason, the normal equations are viewed with suspicion by numerical analysts \cite{Hig22}, but their invocation here is safe because the preconditioned matrix $\mat{A}\mat{P}^{-1}$ has been assumed to be well-conditioned \cref{eq:good-prec}. Nonetheless, since every application of $\mat{P}^{-1}$ incurs significant numerical errors, proving backward stability is by no means straightforward; in fact, the algorithm is not backward stable without iterative refinement.

With this preparation, we can consider a variant of PLSQR-IR where we use any iterative method to solve the preconditioned normal equations \cref{eq:pre-normal}:
\actionbox{\textbf{Stable preconditioned linear system meta-solver:} Beginning from the trivial initial guess $\vec{x}_0 = \vec{0}$, do the following for $i=0,1,\ldots,t-1$:

\begin{enumerate}[label=(\alph*)]
    \item \textbf{Form the right-hand side:} Set $\vec{c}_i \gets \mat{P}^{-\top}(\mat{A}^\top(\vec{b} - \mat{A}\vec{x}_i))$.
    \item \textbf{Solve the preconditioned normal equations:} Using an iterative method, solve the system
    \begin{equation} \label{eq:pre-normal-b}
        (\mat{P}^{-\top}\mat{A}^\top\mat{A}\mat{P}^{-1}) \, \vec{\delta y}_i = \vec{c}_i.
    \end{equation}
    \item \textbf{Iterative refinement:} Update $\vec{x}_{i+1} \gets \vec{x}_i + \mat{P}^{-1}\vec{\delta y}_i$.
\end{enumerate}}
\noindent The PLSQR-IR method \cref{eq:PLSQR-IR} is equivalent, in exact arithmetic, to this meta-solver with CG or Lanczos used to solve \cref{eq:pre-normal-b}.
We emphasize that, when solving \cref{eq:pre-normal-b}, we do not explicitly form the matrix $\mat{P}^{-\top}\mat{A}^\top\mat{A}\mat{P}^{-1}$, instead applying its action on a vector $\vec{z}$ as $\mat{P}^{-\top}(\mat{A}^\top(\mat{A}(\mat{P}^{-1}\vec{z})))$.

For our main theoretical results, we will analyze this meta-solver using the Lanczos method to solve \cref{eq:pre-normal-b}; see \cref{sec:analysis} for the specific Lanczos implementation we are using.
Our main theorem may be summarized as follows:

\begin{inftheorem}[Preconditioning the normal equations with iterative refinement] \label{infthm:main}
    Let $\mat{A}$ be a numerically full-rank matrix, for which $\cond(\mat{A})u \ll 1$, and suppose $\mat{P}$ is a near-perfect preconditioner \cref{eq:good-prec}.
    Assume that the operations $\mat{A}\vec{z}$, $\mat{A}^\top\vec{z}$, $\mat{P}^{-1}\vec{z}$, $\mat{P}^{-\top}\vec{z}$ are performed in a backward stable way, and use the Lanczos method (\cref{alg:lanczos}) to solve the system \cref{eq:pre-normal-b}.
    With $t \le \order(\log(1/u) / \log(1/(\kappa u)))$ iterative refinement steps
    and $q \le \order(\log(1/(\kappa u)))$ iterations per Lanczos solve, the meta-algorithm outputs a backward stable solution to $\mat{A}\vec{x} = \vec{b}$.
    The total number of Lanczos steps across all iterative refinement steps is at most $\order(\log(1/u))$, and the primitives $\mat{A}\vec{z}$, $\mat{A}^\top \vec{z}$, $\mat{P}^{-1}\vec{z}$, and $\mat{P}^{-\top}\vec{z}$ are each invoked at most $\order(\log(1/u))$ times.
\end{inftheorem}

See \cref{thm:main-lanczos} for a formal statement and \cref{thm:main-meta} for a generalization.
The choice of Lanczos as the solver for \cref{eq:pre-normal-b} may seem odd, but we remind the reader that LSQR, CG on the normal equations, and Lanczos on the normal equations are all equivalent in exact arithmetic.
Of the three algorithms, we view Lanczos as having the most mature analysis in finite precision and LSQR as having the least.
The general version of \cref{infthm:main} treats a general solver for \cref{eq:pre-normal-b}, allowing us to obtain results for PLSQR-IR should better analysis of LSQR be developed in the coming years.
Given our numerical testing and \cref{infthm:main}, we regard the backward stability of PLSQR-IR to be almost certain.

\subsection{Related work}
We highlight connections of our work to three areas of related work: restarting Krylov methods, Krylov methods for iterative refinement, and backward stable randomized least-squares algorithms.

\paragraph{Restarting Krylov methods}
Our proposal is equivalent, in exact arithmetic, to restarting the PLSQR method, i.e.,
\begin{equation} \label{eq:LSQR-restarting}
    \begin{split}
    \vec{x}_1 &\gets \Call{PLSQR}{\mat{A},\mat{P},\vec{b},\texttt{initial}=\vec{0}}, \\
    \vec{x}_2 &\gets \Call{PLSQR}{\mat{A},\mat{P},\vec{b},\texttt{initial}=\vec{x}_1}.
    \end{split}
\end{equation}
(Compare \cref{eq:PLSQR-IR}.)
As best we can tell, the proposal to use restarting or iterative refinement \emph{to improve the attainable accuracy of preconditioned Krylov methods} is somewhat novel.
Some references (e.g., \cite[p.~49]{She94a}) advocate for periodically recomputing the residual in CG using the formula $\vec{b} - \mat{A}\vec{x}_i$ to improve its resilience to finite precision effects, which has a similar effect to restarting; however, this technique is an empirical fix and no stability analysis is available.
Residual recomputing does not appear to be a widely adopted practice, and MATLAB's \texttt{lsqr} and \texttt{pcg} methods terminate with errors many orders of magnitude larger than direct methods for the examples considered in this paper.
It is not obvious whether residual recomputing is applicable at all with LSQR.

In contrast to CG and LSQR, restarting the GMRES method is common.
We emphasize that the motivations for restarting/iterative refinement for PLSQR in our work and the typical motivations for restarting GMRES are entirely separate: Restarting for GMRES is done periodically to control computational costs associated with storing and orthogonalizing the full Krylov subspace; restarting GMRES can \emph{hurt} the accuracy and convergence of the method.
By contrast, our work restarts PLSQR only at judicious times, which leads to significant \emph{improvements} in the numerical accuracy. 

\paragraph{Krylov methods as an alternative for iterative refinement}
An alternative line of work (see, e.g., \cite{amestoy2024five,carson2018accelerating,HP21,CH17}) has used preconditioned Krylov methods like GMRES or CG as \emph{replacements} for iterative refinement for solving linear systems of equations using multiple precisions.
The basic idea is this:
Given access to a low-precision \LU factorization $\mat{A} \approx \mat{\hat{L}}\mat{\hat{U}} \eqqcolon \mat{P}$ computed in low precision $u' \gg u$, we may solve $\mat{A}\vec{x} = \vec{b}$ using GMRES with preconditioner $\mat{P}$, applied via triangular substitution.
While this work shares some similarity to ours, their aims are fundamentally different: They use preconditioned Krylov methods to perform the role of iterative refinement in at least two precisions $u', u$, whereas we use iterative refinement to improve the attainable accuracy of preconditioned Krylov methods using iterative refinement in a uniform precision $u$.

\paragraph{Backward stable randomized least squares}
Our paper builds on recent work \cite{EMN24} by a subset of the present authors and Maike Meier, which demonstrated that, provided access to both a high-quality preconditioner \emph{and a high-quality initialization}, a class of preconditioned iterative methods for least squares achieve backward stability with a single step of iterative refinement, resolving the open question \cite{Epp24a,MNTW24} of whether \emph{any} fast randomized least-squares solver was backward stable.
The present paper differs from \cite{EMN24} in that we consider square systems, we do not assume access to a high-quality initialization, and PLSQR-IR generally requires more than one step of iterative refinement to attain backward stability.
We emphasize that our results \emph{are not} a consequence of the main theorem in \cite{EMN24}.

\subsection{Main ideas and outline}
This paper contains three main ideas:
\begin{enumerate}[label=(\roman*)]
    \item With iterative refinement and a sufficiently good preconditioner, iterative methods based on the normal equations are fast and backward stable.
    \item In many ways, the problem of a designing an ``LSQR-good'' preconditioner is easier than designing a ``GMRES-good'' preconditioner.
    For this reason, we argue that (P)LSQR may be a more attactive solver for nonsymmetric linear systems than has been previously realized.
    \item Empirically, \emph{left-preconditioned} LSQR appears to be backward stable \emph{even without iterative refinement}.
\end{enumerate}
The first idea is a theorem, the second is a perspective, and the third is an empirical observation.
We discuss (ii) in \cref{sec:lsqr}, and provide evidence for (iii) in \cref{sec:left-prec-lsqr}.
We discuss positive definite systems in \cref{sec:pd}, implementation details for PLSQR-IR appear in \cref{sec:impl-exp}., and backward error analysis appears in \cref{sec:analysis}.

\subsection{Reproducible research}
Code for the methods and experiments in this paper are available at \url{https://github.com/eepperly/Stable-Preconditioned-Solvers}.

\section{LSQR versus GMRES} \label{sec:lsqr}

What iterative method should one use to solve nonsymmetric linear systems?
For many, the natural answer would be (preconditioned) GMRES, and it has been the de facto choice in a number of application domains.
This section will critically evaluate this choice and identify situations in which methods based on the normal equations like LSQR may be more effective.

\subsection{Illustrative examples}

From the start, let us emphasize that there are examples of nonsymmetric matrices $\mat{A}$ for which GMRES dramatically outperforms LSQR (equivalently in exact arithmetic, CG on the normal equations) and visa versa, a phenomenon which is vividly demonstrated in the famous paper \cite{NRT92}.
To set the stage for our discussion and to illustrate some of the possible behaviors of GMRES and LSQR, we highlight three example matrices of size $n=1000$.
To build these matrices, we generate a well-conditioned matrix $\mat{W} \in \real^{n\times n}$ with $\cond(\mat{W}) = 2$ and Haar-random orthogonal matrix $\mat{U} \in \real^{n\times n}$, and define diagonal matrices
\begin{align*}
    \mat{D} &= \texttt{diag(logspace(-2,0,n))} = \diag\left( 10^{-2(i-1)/(n-1)} : i = 1,\ldots,n \right), \\
    \mat{D}' &= \texttt{diag(1./linspace(1,4,n))} = \diag\left( \frac{1}{1 + 3(i-1)/(n-1)} : i = 1,\ldots,n \right).
\end{align*}
We define test matrices 
\begin{align*}
    \texttt{ClusterEigsIllCond} &= \mat{W}\mat{D}\mat{W}^{-1}, \\
    \texttt{SpreadEigsWellCond} &= \mat{U}\mat{D}', \\
    \texttt{ClusterEigsWellCond} &= \mat{W}\mat{D}'\mat{W}^{-1}.
\end{align*}
As the names suggest, the first and third matrices have real eigenvalues clustered in an interval of the form $[a,1]$, and the second matrix has its eigenvalues spread out over an annulus centered at zero in the complex plane.
The first matrix is mildly ill-conditioned (condition number $\approx 100$), and the second and third matrices are well-conditioned (condition number $\approx 4$). 

\begin{figure}[t]
    \centering
    \includegraphics[width=0.99\linewidth]{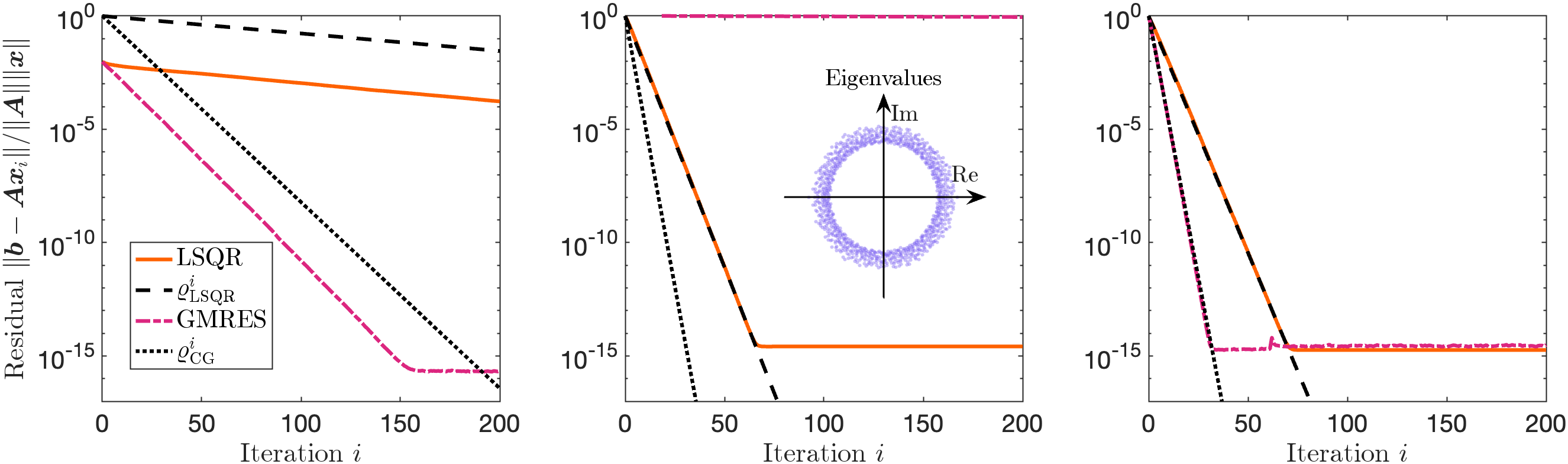}

    \parbox{0.32\linewidth}{\centering \texttt{ClusterEigsIllCond}}
    \parbox{0.32\linewidth}{\centering \texttt{SpreadEigsWellCond}}
    \parbox{0.32\linewidth}{\centering \texttt{ClusterEigsWellCond}}
    
    \caption{Convergence of LSQR and GMRES on three test matrices described in the text.
    The spectrum of the second matrix is shown as an inset on the second panel.
    Convergence rates for CG and LSQR for a matrix of condition number $\kappa$ are shown.}
    \label{fig:gmres-lsqr}
\end{figure}

Results for GMRES and LSQR are shown in \cref{fig:gmres-lsqr}.
The first panel shows the good case for GMRES: The matrix \texttt{ClusterEigsIllCond} is nearly normal and has clustered eigenvalues, and the rate of convergence for GMRES is comparable to that of CG when applied to a symmetric positive definite system with the same condition number, $\varrho_{\rm CG} \coloneqq (\sqrt{\kappa} - 1)/(\sqrt{\kappa}+1)$ for $\kappa = \cond(\mat{A})$.
As LSQR is equivalent to CG applied to the normal equations, its convergence rate is much slower, $\varrho_{\rm LSQR} \coloneqq (\kappa-1)/(\kappa+1)$.
The second panel shows the good case for LSQR: The matrix \texttt{SpreadEigsWellCond} is well-conditioned, but its eigenvalues are spread out in a ring on the complex plane, causing a complete failure in GMRES.
The final panel shows an example where the eigenvalues are clustered and the matrix is well-conditioned, and both LSQR and GMRES  converge at fast rates.

\subsection{Convergence theories}

The differing behaviors of (preconditioned) GMRES and LSQR in \cref{fig:lsqr,fig:gmres-lsqr} is well-described by existing theory.
As we have mentioned, preconditioned LSQR is equivalent to conjugate gradient on the preconditioned normal equations \cref{eq:pre-normal}.
In exact arithmetic, we have the standard bound
\begin{equation} \label{eq:lsqr-conv}
    \norm{\vec{b} - \mat{A}\vec{x}_k} \le 2\varrho_{\rm LSQR}^k\norm{\vec{b}} \quad \text{with } \varrho_{\rm LSQR} = \frac{\cond(\mat{A}\mat{P}^{-1}) - 1}{\cond(\mat{A}\mat{P}^{-1}) + 1}.
\end{equation}
In particular, we obtain rapid convergence under the sole condition that $\mat{A}\mat{P}^{-1}$ is well-conditioned \cref{eq:good-prec}.

The convergence theory for (preconditioned) GMRES is more nuanced.
The classical result shows that the residual after $k$ steps of PGMRES satisfies the bound
\begin{equation} \label{eq:gmres-conv}
    \norm{\vec{b} - \mat{A}\vec{x}_k} \le \kappa_{\rm V}(\mat{A}\mat{P}^{-1}) \cdot \min_{\substack{\deg p \le k \\ p(0) = 1}} \max_{\lambda \in \Lambda(\mat{A}\mat{P}^{-1})} |p(\lambda)| \cdot \norm{\vec{b}}.
\end{equation}
Here, $\kappa_{\rm V}(\mat{B})$ is the minimal condition number of any eigenvector matrix of $\mat{B}$, the minimum is taken over all degree-$k$ polynomials with value $1$ at the origin, and the maximum is taken over the spectrum of $\mat{A}\mat{P}^{-1}$.
This result suggests that to obtain a high-quality preconditioner for GMRES requires ensuring that the preconditioned matrix $\mat{A}\mat{P}^{-1}$ is nearly normal (so that $\kappa_{\rm V}(\mat{A}\mat{P}^{-1})$ is small) and has a spectrum over which there exists a good polynomial approximation to the function $1/z$.
For this latter condition to hold, the eigenvalues should be clustered on a geometrically compact region that is well-separated from and does not wrap around the origin.

A famous example shows that, even for an orthogonal matrix $\mat{A}$ (which is perfectly conditioned $\cond(\mat{A}) = 1$), GMRES can take $n$ steps to produce a nontrivial result:
\begin{equation} \label{eq:gmres-bad}
    \mat{A} = \twobytwo{\vec{0}_{1\times (n-1)}}{1}{\Id_{n-1}}{\vec{0}_{(n-1)\times 1}}, \vec{b} = \twobyone{1}{\vec{0}_{n-1}}.
\end{equation}
The eigenvalues of this matrix form a set of $n$th roots of unity, which are perfectly ``anti-clustered''.
The middle panel of \cref{fig:gmres-lsqr} provides a similar example.

\subsection{Why preconditioned LSQR?\nopunct} \label{sec:why-preconditioned-lsqr}

As both theory and the examples in \cref{fig:gmres-lsqr} show, either GMRES or LSQR could be the appropriate method for a particular matrix $\mat{A}$.
But with preconditioning, we have a mechanism to \emph{engineer} the matrix $\mat{A}\mat{P}^{-1}$ to have desirable properties.
This raises the question: \emph{Is it easier to design a good preconditioner for LSQR or GMRES?}
While we do not claim to definitively answer this question, we believe there are reasons to expect that finding an \emph{LSQR-good} preconditioner should be easier than finding a \emph{GMRES-good} preconditioner.

Our main argument can be summarized as follows:
\actionbox{An LSQR-good preconditioner must only be a \emph{near-orthogonalizer}, whereas a GMRES-good preconditioner must be a \emph{near-inverse}.}
\noindent This statement is supported by both theory and experiment.
First, observe that empirically in \cref{fig:gmres-lsqr} and theoretically in \cref{eq:lsqr-conv}, the convergence rate of LSQR is governed by the condition number $\cond(\mat{A}\mat{P}^{-1})$, which measures the \emph{near-orthogonality} and \emph{singular value clustering} of $\mat{A}\mat{P}^{-1}$.
In particular, $\cond(\mat{A}\mat{P}^{-1}) = 1$ if and only if $\mat{A}\mat{P}^{-1}$ is a scalar multiple of an orthogonal matrix (equivalently, all singular values are clustered at a single point).
For GMRES, the middle panel of \cref{fig:gmres-lsqr} and the example \cref{eq:gmres-bad} demonstrate that the performance of GMRES can be arbitrarily poor if $\mat{P}$ is just a near-orthogonalizer.
Instead, as demonstrated in \cref{eq:gmres-conv} and the left and right panels of \cref{fig:gmres-lsqr}, a GMRES-good preconditioner $\mat{P}$ must be a near-inverse in the sense that it clusters the eigenvalues of $\mat{A}\mat{P}^{-1}$ to a geometrically compact region.
(And, in principle at least, a GMRES-good preconditioner must make $\mat{A}\mat{P}^{-1}$ have well-conditioned eigenvectors, in view of \cref{eq:gmres-conv}.)

It is natural to suspect that finding a near-orthogonalizing preconditioner (i.e., an LSQR-good preconditioner) should be easier than finding a near-inverse preconditioner. 
After all, a near-inverse preconditioner seeks to bring $\mat{A}\mat{P}^{-1}$ towards the single matrix $\Id$ (or more precisely, a scaling $\alpha \Id$), where a near-orthogonalizing preconditioner must only bring $\mat{A}\mat{P}^{-1}$ close to the set of orthogonal matrices, which is a manifold of dimension $\approx n^2/2$.
The much larger dimensionality of near-orthogonal matrices suggests a wide design space for designing preconditioners for LSQR, including ideas such as low-precision or partial \QR factorizations \cite{LS06,CD25}, approximate Cholesky factors of the Gram matrix $\mat{A}^\top\mat{A}$, or the ``LV'' part of a \ULV decomposition of a rank-structured matrix \cite{CGP06}.
(So far, much of the work on these types of preconditioners has focused on overdetermined least-squares problems, but the ideas should be transferable to square linear systems as well.)
The work \cite{LLYNUZ2025}, which is being released concurrently with ours, also explores solving nonsymmetric linear systems by applying preconditioned iterative methods to the normal equations. That paper adopts a perspective based on functional analysis and discretization of PDEs, and it includes numerical evaluations of several preconditioners for use with LSQR/CG on the normal equations.

The GMRES method certainly has advantages over LSQR, which we review in \cref{sec:gmres-strengths}, so we do not want our case for preconditioned LSQR to be taken as too absolute.
Still, we argue that the design of good preconditioners for nonsymmetric linear systems for use with LSQR is an underexplored area.
Given the position of GMRES as the most popular Krylov method for nonsymmetric linear systems, significant algorithm design effort has gone into the design of preconditioners, adapted to GMRES's strengths and attempting to compensate for its weaknesses.
A small fraction of that effort has been paid to developing near-orthogonalizing, LSQR-good preconditioners.
We are optimistic that many exciting preconditioning strategies lie in waiting to be discovered for use in solving linear systems with normal equation-based solvers like LSQR.

\section{The mysterious stability of left-preconditioned LSQR} \label{sec:left-prec-lsqr}

So far, we have focused on \emph{right-preconditioned} LSQR, which we observe to be backward stable when combined with iterative refinement.
Our main theoretical results (\cref{infthm:main}) support these observations with rigorous proof, up to replacing PLSQR with the Lanczos method applied to the preconditioned normal equations.

\begin{figure}
    \centering
    \includegraphics[width=0.48\linewidth]{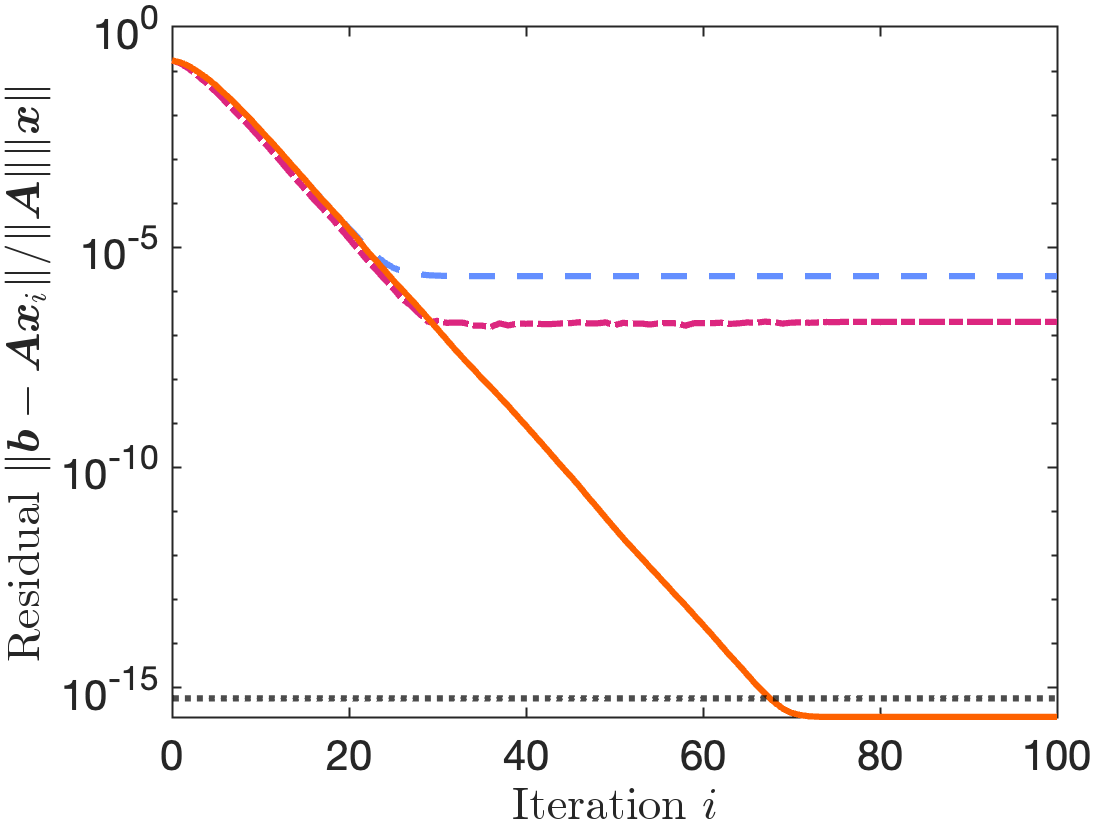}
    \includegraphics[width=0.48\linewidth]{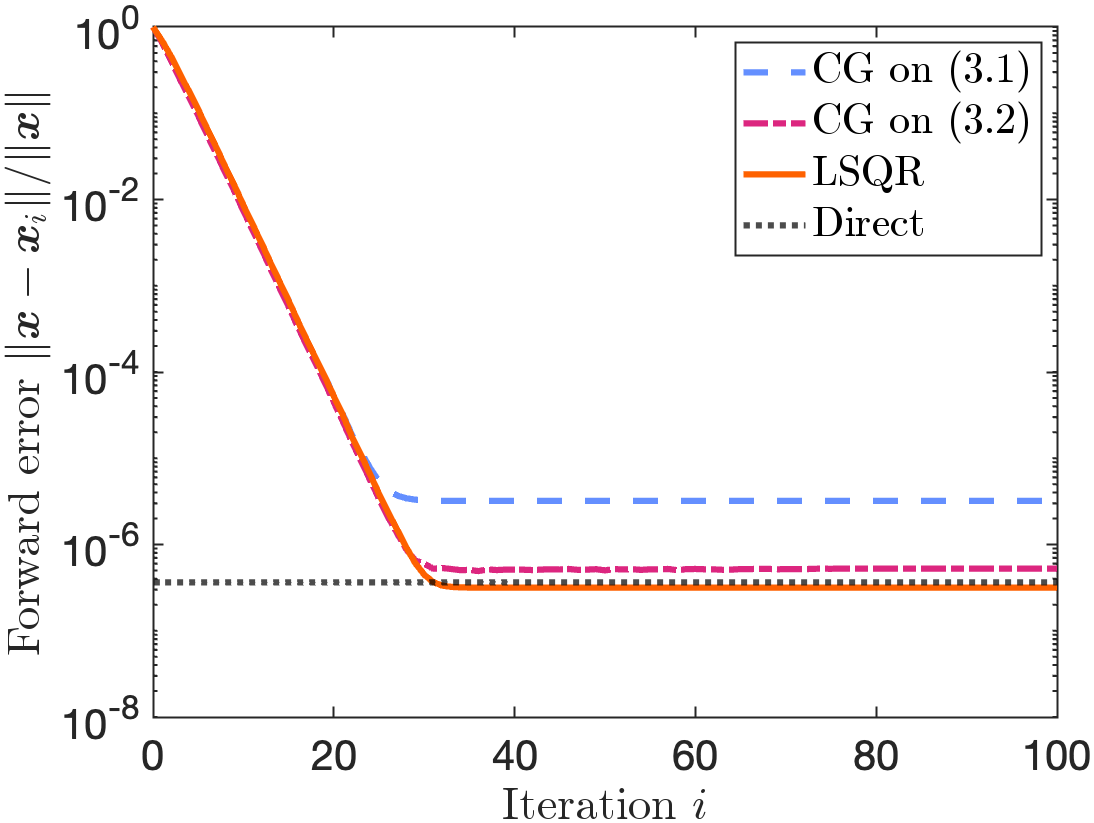}
    
    \caption{Residual (\emph{left}) and forward error (\emph{right}) for left-preconditioned LSQR and CG on the normal equations \cref{eq:left-pre-normal} and adjoint normal equations \cref{eq:left-pre-adj-normal}.}
    \label{fig:left-lsqr}
\end{figure}

During numerical testing, we uncovered evidence that \emph{left-preconditioned LSQR} satisfies even stronger stability properties, achieving backward stability \emph{without iterative refinement}.
This phenomenon is illustrated in \cref{fig:left-lsqr}.
Here, we test with a random matrix $\mat{A}$ with $\cond(\mat{A}) = 10^{10}$ and a near-perfect \emph{left} preconditioner $\mat{P}$ satisfying $\cond(\mat{P}^{-1}\mat{A}) = 4$; see \cref{sec:experiment-details} for details.
The orange solid line shows LSQR on the left-preconditioned linear system
\begin{equation*}
    \mat{P}^{-1}\mat{A}\vec{x} = \mat{P}^{-1}\vec{b},
\end{equation*}
which converges geometrically until reaching machine accuracy.
We emphasize that LSQR is executed \emph{without restarting or iterative refinement} in this experiment.

The strong stability properties exhibited by left-preconditioned LSQR in this experiment are especially surprising because they do not manifest when CG is used in place of LSQR.
Indeed, the blue dashed and magenta dash-dotted curves show conjugate gradient applied to the left-preconditioned normal equations
\begin{equation} \label{eq:left-pre-normal}
    (\mat{A}^\top\mat{P}^{-\top}\mat{P}^{-1}\mat{A})\vec{x} = \mat{A}^\top(\mat{P}^{-\top}(\mat{P}^{-1}\vec{b}))
\end{equation}
or the left-preconditioned adjoint normal equations
\begin{equation} \label{eq:left-pre-adj-normal}
    (\mat{P}^{-1}\mat{A}\mat{A}^\top\mat{P}^{-\top})\vec{y} = \mat{P}^{-1}\vec{b} \quad \text{with } \vec{x} = \mat{A}^\top(\mat{P}^{-\top}\vec{y}).
\end{equation}
In both cases, the backward error saturates ten orders of magnitude above machine accuracy.
(The forward error, however, is comparable to the direct solve and left-preconditioned LSQR.)

This experiment illustrates a fundamental disjunction between the stability properties of left and right preconditioning:
For right preconditioning, iterative refinement is needed to obtain backward stability and the numerical behavior of the method is robust to the precise solver used to solve the preconditioned normal equations (see \cref{thm:main-meta}).
For left preconditioning, backward stability is obtained for left-preconditioned LSQR without iterative refinement and these stability properties do not carry over to other methods for solving the left-preconditioned normal equations like CG.
At present, we do not have a theoretical explanation for the observed backward stability of left-preconditioned LSQR.
We have done additional numerical testing beyond what is reported here and have always observed left-preconditioned LSQR to be backward stable when the matrix $\mat{A}$ is numerically full-rank $\cond(\mat{A})\le 0.1/u$ and the preconditioner is good $\cond(\mat{P}^{-1}\mat{A}) \le 10$.

\section{Symmetric positive definite systems} \label{sec:pd}

\begin{figure}
    \centering
    \includegraphics[width=0.9\linewidth]{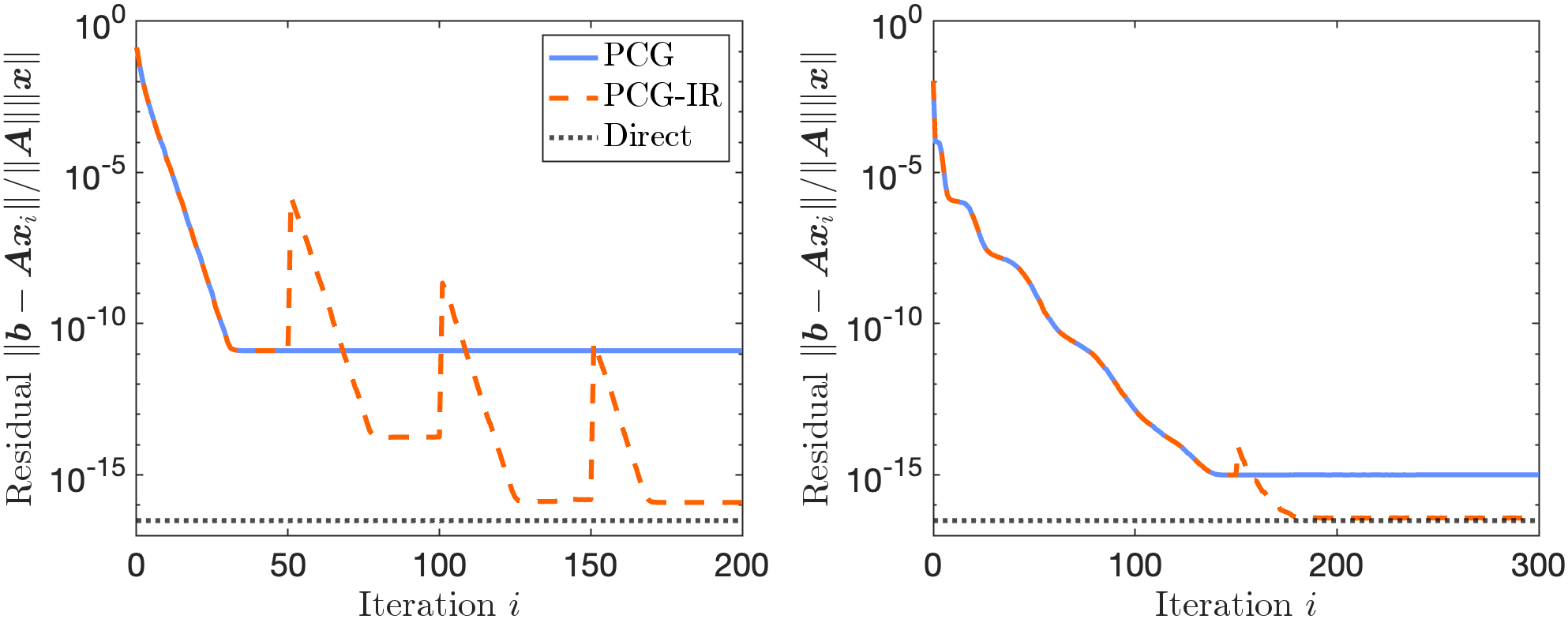}
    \caption{Residual for PCG applied to two example matrices, described in the text.
    PCG with iterative refinement every 50 (\emph{left}) or 150 (\emph{right}) iterations is seen to improve the attainable accuracy by several orders of magnitude.}
    \label{fig:pd}
\end{figure}

While this paper is largely concerned with nonsymmetric systems of linear equations, we mention that iterative refinement can also help improve the maximum achievable accuracy for preconditioned Krylov methods for symmetric positive definite (spd) systems of linear equations.
An illustration is provided in \cref{fig:pd}.
The left panel shows a synthetic matrix with 
size $n=10^3$, condition number $\cond(\mat{A}) = 10^{10}$, and near-perfect preconditioner $\mat{P}$ satisfying $\cond(\mat{P}^{-1/2}\mat{A}\mat{P}^{-1/2}) \approx 1.3$.
The second example is obtained by applying kernel ridge regression to the \texttt{COMET\_MC\_SAMPLE} data set, resulting in linear system of dimension $n = 10^4$ with condition number $\cond(\mat{A})\approx 6\times 10^{13}$.
We employ a Nystr\"om preconditioner of rank $k = 850$ computed using the randomly pivoted Cholesky algorithm \cite{CETW25,DEF+23}; the preconditioner is of modest quality, $\cond(\mat{P}^{-1/2}\mat{A}\mat{P}^{-1/2}) \approx 200$.
See \cref{sec:experiment-details} for more information.

In both examples, preconditioned conjugate gradient without iterative refinement (blue solid line) is not backward stable.
The orange dashed lines show PCG with iterative refinement at every 50 (\emph{left}) or 150 (\emph{right}) iterations; backward stability is attained after two or one steps of iterative refinement respectively.
We observe that the residual jumps for each method after iterative refinement.
Theoretical analysis and more extensive numerical testing of iterative refinement for solving spd linear systems is a natural subject for future work.

\section{LSQR-IR implementation} \label{sec:impl-exp}

To achieve the good performance in practice, we propose an implementation of (P)LSQR-IR that automates the decisions of when to restart and when to terminate.
We begin by estimating the spectral norm $\norm{\mat{A}}$ by applying $\lceil \log n\rceil$ steps of the randomized power method \cite[sec.~6.2]{MT20b}, resulting in $\mathrm{normest} \approx \norm{\mat{A}}$. (All logarithms in this paper are natural.)
Every $f$ iterations, where $f$ is a user-specified parameter, we compute the backward error estimate
\begin{equation*}
    \mathrm{berr}_i \coloneqq \frac{\norm{\vec{b} - \mat{A}\vec{x}_i}}{\mathrm{normest} \cdot \norm{\vec{x}_i}}.
\end{equation*}
(Recall \cref{eq:rigal-gaches}.)
If $\mathrm{berr}_i$ is less than a tolerance (we use $n^{1/2}u$), we terminate.
Otherwise, we check whether the backward error has stagnated: If $\mathrm{berr}_i > 0.9 \cdot \mathrm{berr}_{i-f}$, we stop the current PLSQR run and begin iterative refinement.

We make a few other comments on implementation:
\begin{itemize}
    \item \textbf{Restarting vs.\ iterative refinement.} In our experience, iterative refinement \cref{eq:PLSQR-IR} and restarting \cref{eq:LSQR-restarting} have similar convergence rates and stability properties.
    We show results only for iterative refinement in this paper.
    \item \textbf{``Preconditioned LSQR.''}
    When preconditioning LSQR, it is usual to run LSQR directly on the preconditioned problem \cref{eq:pre-ls}, solving for $\vec{y} = \argmin_{\vec{z}} \norm{\vec{b} - (\mat{A}\mat{P}^{-1})\vec{z}}$, then applying the inverse-preconditioner to obtain $\vec{x} = \mat{P}^{-1}\vec{y}$.
    We use this approach in our work.
    Alternately, there are purpose-built preconditioned LSQR implementations that track approximate solutions $\vec{x}_i\approx \vec{x}$ rather than its transformation $\vec{y}_i = \mat{P}\vec{x}_i$ \cite[sec.~4.5]{Mei24}.
\end{itemize}
Pseudocode for this PLSQR-IR implementation is provided in \cref{alg:lsqr-ir}.
Our code can be found at \url{https://github.com/eepperly/Stable-Preconditioned-Solvers}.

\begin{algorithm}[t]
	\caption{Automatic implementation of PLSQR-IR} \label{alg:lsqr-ir}
	\begin{algorithmic}[1]
		\Require Subroutines \Call{Apply}{}, \Call{ApplyT}{}, \Call{Pre}{}, \Call{PreT}{} implementing $\vec{z} \mapsto \mat{A}\vec{z},\mat{A}^\top\vec{z},\mat{P}^{-1}\vec{z},\mat{P}^{-\top}\vec{z}$, right-hand side $\vec{b} \in \real^n$, check frequency $f$
		\Ensure Backward stable solution $\vec{x} \approx \mat{A}^{-1}\vec{b}$
		\State $\vec{z} \gets \Call{GaussianVector}{n}$ \Comment{Random initialization}
        \For{$i=1,\ldots,\lceil\log(n)\rceil$}
        \State $\vec{z} \gets \Call{ApplyT}{\textsc{Apply}(\vec{z})}$, $\vec{z} \gets \vec{z} / \norm{\vec{z}}$ \Comment{Power iteration}
        \EndFor
        \State $\mathrm{normest} \gets \norm{\Call{ApplyT}{\vec{z}}}$ \Comment{Estimate for $\norm{\mat{A}}$}
        \State $\beta \gets \norm{\vec{b}}$, $\vec{u} \gets \vec{b} / \beta$, $\vec{y} \gets \vec{0}$, $\vec{x}\gets \vec{0}$, $\mathrm{berr}_0 \gets \infty$ \Comment{Initialize for first solve}
        \While{\texttt{true}} \Comment{Keep going until backward stable solution is found}
        \State $\vec{v} \gets \Call{PreT}{\textsc{ApplyT}(\vec{u})}$, $\alpha \gets \norm{\vec{v}}$, $\vec{v} \gets \vec{v} / \alpha$, $\vec{w} \gets \vec{v}$ \Comment{Initialize each solve}
        \State $\overline{\phi} \gets \beta$, $\overline{\varrho} \gets \alpha$
        \For{$i=1,2,\ldots$}
        \State $\vec{u}\gets \Call{Apply}{\textsc{Pre}(\vec{v})} - \alpha \vec{u}$, $\beta \gets \norm{\vec{u}}$, $\vec{u} \gets \vec{u}/\beta$ \Comment{Apply $\mat{A}^{-1}\mat{P}^{-1}$}
        \State $\vec{v} \gets \Call{PreT}{\textsc{ApplyT}(\vec{u})}-\beta \vec{v}$, $\alpha \gets \norm{\vec{v}}$, $\vec{v} \gets \vec{v} / \alpha$ \Comment{Apply $\mat{P}^{-\top}\mat{A}^\top$}
        \State $\varrho \gets \sqrt{\overline{\varrho}^2 + \beta^2}$
        \State $c \gets \overline{\varrho} / \varrho$, $s \gets \beta / \varrho$
        \State $\theta \gets s \alpha$, $\overline{\varrho} \gets -c\alpha$, $\phi\gets c\overline{\phi}$, $\overline{\phi}\gets s\overline{\phi}$
        \State $\vec{y} \gets \vec{y} - \phi/\varrho\cdot \vec{w}$, $\vec{w} \gets \vec{v} - \theta/\varrho\cdot \vec{w}$ \Comment{Update solution $\vec{y}$, search direction}
        \If{$\Call{Mod}{i,f} = 0$} \Comment{Check termination/restarting every $f$ iterations}
        \State $\vec{x}' \gets \vec{x}+\Call{Pre}{\vec{y}}$, $\vec{r} \gets \vec{b} - \Call{Apply}{\vec{x}'}$ \Comment{Compute solution $\vec{x}$, residual}
        \State $\mathrm{berr}_i = \norm{\vec{r}} / (\norm{\vec{x}'} \cdot \mathrm{normest})$ \Comment{Backward error estimate}
        \State \textbf{if} $\mathrm{berr}_i \le n^{1/2}u$ \textbf{then return} $\vec{x}'$ \Comment{Terminate if backward stable}
        \State \textbf{if} $\mathrm{berr}_i > 0.9\cdot \mathrm{berr}_{i-f}$ \textbf{then break} \Comment{Restart if stagnated}
        \EndIf
        \EndFor
        \State $\vec{x} \gets \vec{x}'$, $\vec{y} \gets \vec{0}$, $\vec{u} \gets \vec{r}$, $\beta \gets \norm{\vec{u}}$, $\vec{u} \gets \vec{u} /\beta$ \Comment{Iterative refinement}
        \EndWhile
	\end{algorithmic}
\end{algorithm}

\begin{figure}
    \centering
    \includegraphics[width=0.48\linewidth]{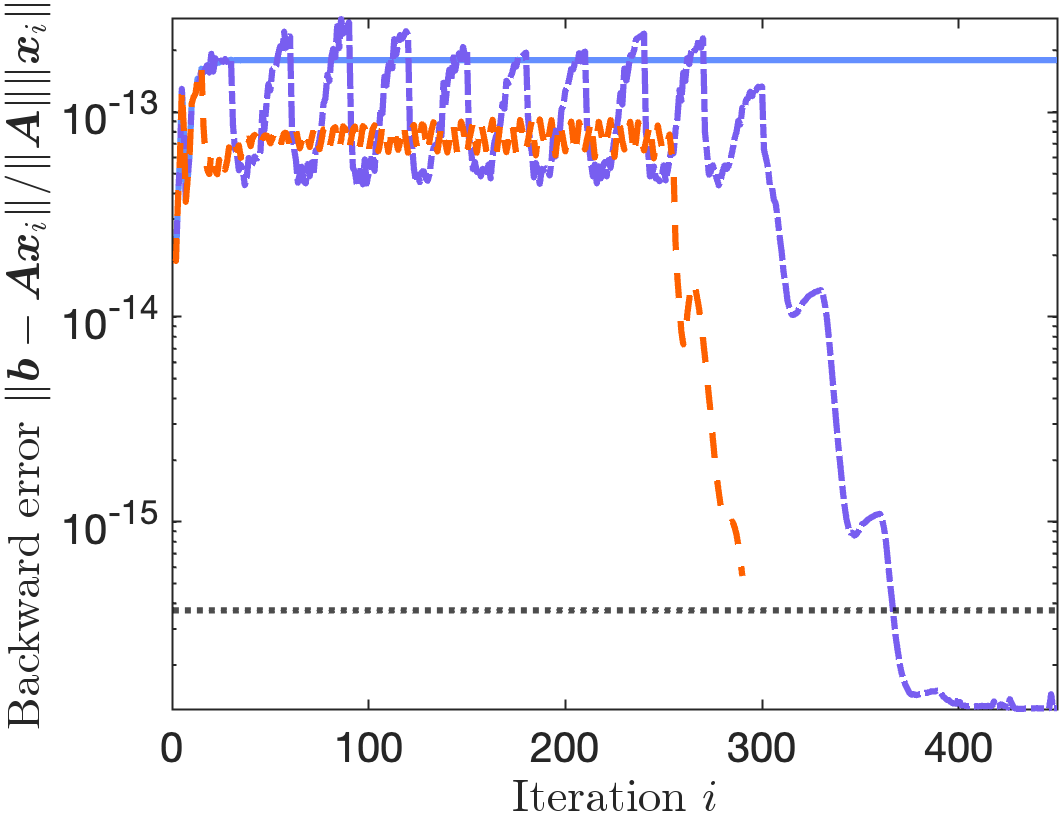}
    \includegraphics[width=0.48\linewidth]{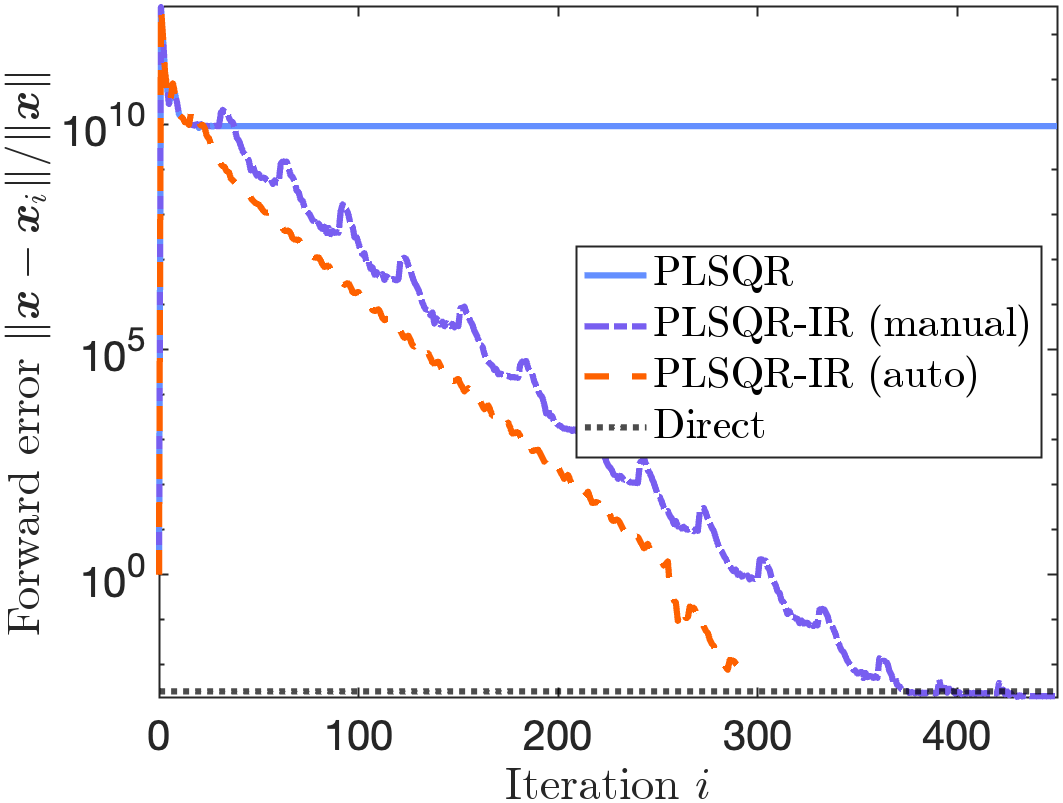}
    \caption{Backward (\emph{left}) and forward (\emph{right}) error PLSQR with no iterative refinement, iterative refinement every 15 iterations, and automatic iterative refinement}
    \label{fig:auto-lsqr-ir}
\end{figure}

Numerical demonstration of our PLSQR-IR implementation with automatic termination and iterative refinement is provided in \cref{fig:auto-lsqr-ir}.
We test on a harder version of the example in \cref{fig:lsqr} with $\cond(\mat{A}) = 10^{14}$ and $\cond(\mat{A}\mat{P}^{-1}) = 10$.
For this example, PLSQR without iterative refinement fails to reduce the backward error at all.
The purple dash-dotted curves show PLSQR-IR with manual iterative refinement every 15 iterations, and the orange dashed curve shows PLSQR-IR with automatic termination and iterative refinement.
The automatic procedure is seen to attain a machine-precision backward error faster than iterative refinement every 15 iterations.
(We note that a better manual choice of restart frequency can lead to comparable or even slightly faster convergence than the automatic restarting procedure on this example. 
Our point is merely that a suboptimal choice of the restart frequency can delay convergence and that the automatic restarting procedure yields good results without need for parameter tuning.)

\section{Analysis} \label{sec:analysis}

This section contains our analysis of iterative methods on the preconditioned normal equations with iterative refinement.
We will establish a formal and more general version of \cref{infthm:main} from the introduction.

\subsection{Philosophy, notation, and assumptions} \label{sec:notation}
The generally accepted definition is that a linear system solver is backward stable if \cref{eq:backward-stable} holds with $c$ equal to a small, low-degree polynomial in the matrix dimension $n$ \cite[sec.~7.6]{Hig02}.
Our analysis will primarily target iterative methods that converge geometrically fast, which we demonstrate to require just $q = \order(\log(1/u))$ iterations to converge to machine accuracy (even in finite-precision arithmetic). 
Under this setting, it is perhaps natural to permit the prefactor $c$ to depend polynomially on the iteration count $q$ in addition to the dimension $n$.
For this reason, this paper will employ the following definition:

\begin{definition}[Backward stability] 
    A linear system solver is \emph{backward stable} if there exists a prefactor $c$, polynomially large in $n$ and $\log(1/u)$, such that whenever $c\cond(\mat{A})u < 1$, the output $\vec{\hat{x}}$ of the procedure satisfies \cref{eq:backward-stable}.
\end{definition}
We believe this definition is reasonable, as factors of $n$ are larger than factors in $\log(1/u)$ for all but the smallest systems in double precision ($\log(1/u) < 37$).
It is generally understood that, for a method to be considered backward stable in a practical sense, the prefactor $c$ should a low-degree polynomial with modest-sized coefficients.

Our approach to backward error analysis in this paper is encapsulated in the following quote by backward error analysis pioneer Jim Wilkinson \cite[p.~356]{Wil74}:
\begin{quote}
    All too often, too much attention is paid to the precise error bound that has been established. The main purpose of [backward stability analysis] is either to establish the essential numerical stability of an algorithm or to show why it is unstable and in doing so to expose what sort of change is necessary to make it stable. The precise error bound is not of great importance.
\end{quote}
Our analysis will adopt Wilkinson's perspective in this quote, and we will not carefully track the prefactors in our analysis.
(As we will see, the proof is already complicated without such tracking.)
As the experiments in this paper demonstrate, the prefactors appear to be small in practice.

We will introduce notation and standing assumptions to simplify the proceedings.
We work under the standard model of floating point arithmetic \cite[sec.~2.2]{Hig02} with unit roundoff $u$.
For $a,b\ge 0$, we write $a\lesssim b$ to denote $a \le p(n,\log(1/u)) b$ for some polynomial function $p$,
and we write $a \ll b$ as a hypothesis to indicate $a \le b/p(n,\log(1/u))$ for a polynomial $p$ that can be taken as large as needed to complete the analysis.
We invoke the $\lesssim$ notation only when $b$ is a multiple of unit roundoff $u$.
We write $a \asymp b$ when $a$ and $b$ are comparable up to absolute constants.
The condition number of $\mat{A}$ is $\kappa \coloneqq \cond(\mat{A})$, and we assume throughout that $\mat{A}$ is numerically full rank, in the sense that
\begin{equation} \label{eq:numerically-full-rank}
    \kappa u \ll 1.
\end{equation}
We assume access to an excellent preconditioner $\mat{P}$ satisfying \cref{eq:good-prec}. 
To simplify the analysis, we assume that $\mat{A}$ and $\mat{P}$ are commensurately scaled:
\begin{equation} \label{eq:com-scale}
    \norm{\mat{A}\mat{P}^{-1}} \asymp 1.
\end{equation}
Combining \cref{eq:good-prec} and \cref{eq:com-scale} yields the useful relations
\begin{equation} \label{eq:AP-norms}
    \norm{\mat{P}} \asymp \norm{\mat{A}}, \quad \norm{\mat{P}^{-1}} \asymp \norm{\mat{A}^{-1}} = \frac{\kappa}{\norm{\mat{A}}}.
\end{equation}
The result of an expression $\vec{z}$ computed in floating point arithmetic will be denoted with either $\fl(\vec{z})$ or $\vec{\hat{z}}$, and the error is $\err(\vec{z}) \coloneqq \fl(\vec{z}) - \vec{z}$.
A vector is \emph{exactly representable if} $\fl(\vec{z}) = \vec{z}$.
Symbols $\vec{e},\vec{e}',\vec{e}_1,\vec{e}_2,\ldots$ denote some vector of norm $\lesssim u$.

\subsection{Main results}

\begin{algorithm}[t]
	\caption{Stable preconditioned linear system meta-solver} \label{alg:meta-algorithm}
	\begin{algorithmic}[1]
		\Require Matrix $\mat{A}\in\real^{n\times n}$, preconditioner $\mat{P} \in \real^{n\times n}$, vector $\vec{b}\in\real^n$, subroutine  $\Call{IterativeSolver}{}$, iterative refinment steps $t$
		\Ensure Approximate solution $\vec{x}_t\in\real^n$ to $\mat{A}\vec{x} = \vec{b}$
        \State $\vec{x}_0 \gets \vec{0}$
        \For{$i=0,1,\ldots,t-1$}
        \State $\vec{c}_i \gets \mat{P}^{-\top}(\mat{A}^\top(\vec{b} - \mat{A}\vec{x}_i))$
        \State $\vec{\delta y}_{i} \gets \Call{IterativeSolver}{\textsc{Apply} = \vec{z} \mapsto \mat{P}^{-\top}(\mat{A}^\top(\mat{A}(\mat{P}^{-1}\vec{z})),\textsc{RHS} = \vec{c}_i}$ 
        \State $\vec{x}_{i+1} \gets \vec{x}_i + \mat{P}^{-1}\vec{\delta y}_{i}$ \Comment{Iterative refinement}
        \EndFor
	\end{algorithmic}
\end{algorithm}

Our first main result treats the general ``meta-solver'' for linear systems by iteratively solving the preconditioned normal equations with iterative refinement from the introduction; see \cref{alg:meta-algorithm} for pseudocode.

\begin{theorem}[Backward stability for the meta-solver] \label{thm:main-meta}
    Let $\mat{A} \in \real^{n\times n}$ be numerically full-rank \cref{eq:numerically-full-rank} and assume that $\mat{P} \in \real^{n\times n}$ is a high-quality preconditioner \cref{eq:good-prec}.
    Assume that the operations $\mat{A}\vec{z}$, $\mat{A}^\top\vec{z}$, $\mat{P}^{-1}\vec{z}$, and $\mat{P}^{-\top}\vec{z}$ are performed in a backward stable way, and assume the \Call{IterativeSolver}{} routine produces an output $\vec{\hat{\delta y}}_i$ satisfying
    \begin{equation} \label{eq:iterative-solver-guarantee}
        \norm{\vec{\hat{\delta y}}_i - (\mat{P}^{-\top}\mat{A}^\top\mat{A}\mat{P}^{-1})^{-1}\vec{c}_i} \lesssim \kappa u \norm{\vec{c}_i}.
    \end{equation}
    The meta-solver \cref{alg:meta-algorithm} satisfies the error guarantee
    \begin{equation} \label{eq:meta-error-guarantee}
        \frac{\norm{\vec{b} - \mat{A}\vec{\hat{x}}_i}}{\norm{\mat{A}}\norm{\vec{x}}} \lesssim u + (c\kappa u)^i \quad \text{with } c\lesssim 1.
    \end{equation}
    In particular, $t = \order(\log(1/u) / \log(1/(\kappa u))$ iterative refinement steps suffice to attain a backward stable solution.
\end{theorem}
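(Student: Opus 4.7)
The plan is to establish a one-step recursion on the residual $\vec{r}_i := \vec{b} - \mat{A}\vec{\hat{x}}_i$ showing that each refinement step contracts $\|\vec{r}_i\|/(\|\mat{A}\|\|\vec{\hat{x}}_i\|)$ by a factor of $\order(\kappa u)$, down to a floor of $\order(u)$; iterating this recursion and summing the geometric tail yields \eqref{eq:meta-error-guarantee}. The backbone of the argument is the exact-arithmetic identity
\begin{equation*}
    \mat{A}\mat{P}^{-1} \mat{B}^{-1} \mat{P}^{-\top}\mat{A}^\top = \Id, \qquad \mat{B} := \mat{P}^{-\top}\mat{A}^\top\mat{A}\mat{P}^{-1},
\end{equation*}
which says that if step (b) of \cref{alg:meta-algorithm} were carried out exactly then step (c) would set $\vec{r}_{i+1} = \vec{0}$; in finite precision $\vec{r}_{i+1}$ is therefore a sum of rounding remnants whose sizes must be pinned down.

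To derive the recursion, I would propagate floating-point errors through each line of \cref{alg:meta-algorithm}. Using the backward stability of the four primitives and the norm identities \eqref{eq:com-scale}--\eqref{eq:AP-norms}, the computed right-hand side has the form $\vec{\hat{c}}_i = \mat{P}^{-\top}\mat{A}^\top(\vec{b}-\mat{A}\vec{\hat{x}}_i) + \vec{\Delta c}_i$ with $\|\vec{\Delta c}_i\| \lesssim u\kappa(\|\vec{b}\|+\|\mat{A}\|\|\vec{\hat{x}}_i\|)$, and assumption \eqref{eq:iterative-solver-guarantee} yields $\vec{\hat{\delta y}}_i = \mat{B}^{-1}\vec{\hat{c}}_i + \vec{e}_s$ with $\|\vec{e}_s\| \lesssim \kappa u\|\vec{\hat{c}}_i\|$. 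Step (c) introduces additional errors $\vec{e}_6,\vec{e}_7$ from the $\mat{P}^{-1}$ apply and the final summation; these are controlled using the primitive guarantees together with the estimate $\|\vec{\hat{\delta y}}_i\| \lesssim \kappa\|\vec{r}_i\|$, which follows from $\|\mat{B}^{-1}\| \asymp 1$ and $\|\vec{c}_i\| \lesssim \kappa\|\vec{r}_i\|$.

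Substituting into $\vec{r}_{i+1} = \vec{b} - \mat{A}\vec{\hat{x}}_{i+1}$ and applying the identity to cancel the leading $\mat{A}\mat{P}^{-1}\mat{B}^{-1}\vec{c}_i = \vec{r}_i$ term leaves
\begin{equation*}
    \vec{r}_{i+1} = -\mat{A}\mat{P}^{-1}\mat{B}^{-1}\vec{\Delta c}_i - \mat{A}\mat{P}^{-1}\vec{e}_s - \mat{A}\vec{e}_6 - \mat{A}\vec{e}_7,
\end{equation*}
in which $\|\mat{A}\mat{P}^{-1}\mat{B}^{-1}\| \asymp 1$. Bounding each term with the estimates above produces the one-step recursion
\begin{equation*}
    \frac{\|\vec{r}_{i+1}\|}{\|\mat{A}\|\|\vec{\hat{x}}_{i+1}\|} \lesssim u + c\kappa u\cdot\frac{\|\vec{r}_i\|}{\|\mat{A}\|\|\vec{\hat{x}}_i\|}
\end{equation*}
for some $c\lesssim 1$. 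A side induction shows $\|\vec{\hat{x}}_i\| \asymp \|\vec{x}\|$ throughout (since the scheme contracts toward $\vec{x}$), so the denominator can be replaced by $\|\vec{x}\|$. Iterating from $\vec{\hat{x}}_0 = \vec{0}$, where $\|\vec{r}_0\|/(\|\mat{A}\|\|\vec{x}\|) \le 1$, and using $\kappa u \ll 1$ to sum the geometric series, proves \eqref{eq:meta-error-guarantee}; requiring the right-hand side to fall below $u$ then gives $t = \order(\log(1/u)/\log(1/(\kappa u)))$.

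The hard part will be making sure each rounding contribution enters the recursion with \emph{one} factor of $\kappa u$, not $\kappa^2 u$. The most delicate place is the term $\mat{A}\vec{e}_6$: naively $\|\mat{A}\|\|\mat{P}^{-1}\| \asymp \kappa$ and $\|\vec{\hat{\delta y}}_i\| \asymp \kappa\|\vec{r}_i\|$, which threatens a $\kappa^2$ blowup. The saving grace is that backward-stability of the $\mat{P}^{-1}$ apply lets one write $\vec{e}_6 = \mat{P}^{-1}\vec{\Delta P}\,\vec{z}$ with $\vec{z} := \mat{P}^{-1}\vec{\hat{\delta y}}_i$, so that $\mat{A}\vec{e}_6 = (\mat{A}\mat{P}^{-1})\vec{\Delta P}\,\vec{z}$, and it is the \emph{product} $\mat{A}\mat{P}^{-1}$ of norm $\asymp 1$, rather than $\mat{A}$ and $\mat{P}^{-1}$ separately, that controls the error size.
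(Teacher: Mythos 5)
Your overall strategy is the same as the paper's: establish a one-step residual recursion of the form $\norm{\vec{r}_{i+1}} \lesssim \kappa u\norm{\vec{r}_i} + u\norm{\mat{A}}\norm{\vec{x}}$ by propagating rounding errors through \cref{alg:meta-algorithm}, then iterate. You also correctly identify the single most delicate point: errors from the $\mat{P}^{-1}$ apply must enter as $\mat{P}^{-1}\vec{e}$ with $\vec{e}$ small, so that $\mat{A}\cdot\mat{P}^{-1}\vec{e}$ is controlled by the well-conditioned product $\mat{A}\mat{P}^{-1}$ rather than by $\norm{\mat{A}}\norm{\mat{P}^{-1}}\asymp\kappa$. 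This is exactly the content of the paper's \cref{prop:backward}, and you reconstruct it correctly.

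However, several of your intermediate bounds carry spurious factors of $\kappa$, and they accumulate into a quantitatively weaker (indeed, non-backward-stable) result. The worst offender is the claim $\norm{\vec{c}_i}\lesssim\kappa\norm{\vec{r}_i}$. In fact $\vec{c}_i=\mat{P}^{-\top}\mat{A}^\top\vec{r}_i$, and by \cref{eq:com-scale} one has $\norm{\mat{P}^{-\top}\mat{A}^\top}=\norm{\mat{A}\mat{P}^{-1}}\asymp 1$, so $\norm{\vec{c}_i}\lesssim\norm{\vec{r}_i}$ with \emph{no} factor of $\kappa$. This is the same ``group before you bound'' principle you invoke to rescue the $\mat{A}\vec{e}_6$ term; you apply it there but not here. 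With your looser bound, the term $\mat{A}\mat{P}^{-1}\vec{e}_s$ (coming from the iterative-solver guarantee $\norm{\vec{e}_s}\lesssim\kappa u\norm{\vec{\hat{c}}_i}$) contributes $\kappa^2 u\norm{\vec{r}_i}$, giving a contraction factor of $\kappa^2 u$ instead of $\kappa u$; that would only yield backward stability for $\kappa\ll u^{-1/2}$, not $\kappa\ll u^{-1}$. Similarly, your bound $\norm{\vec{\Delta c}_i}\lesssim\kappa u(\norm{\vec{b}}+\norm{\mat{A}}\norm{\vec{\hat{x}}_i})$ is too loose in a way that matters: the correct decomposition (see \cref{eq:c-err}) is $\norm{\err(\vec{c}_i)}\lesssim\kappa u\norm{\vec{r}_i}+u\norm{\mat{A}}\norm{\vec{\hat{x}}_i}$, with the $\kappa$ multiplying only the residual. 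The $\kappa$-free term is essential: it is precisely what produces the machine-precision floor $u\norm{\mat{A}}\norm{\vec{x}}$ in \cref{eq:r-bound}. Under your version, the recursion would bottom out at $\kappa u\norm{\mat{A}}\norm{\vec{x}}$, so even after infinitely many refinement steps the backward error would be $\lesssim\kappa u$, which is not backward stable. A lesser point: your normalized recursion divides by $\norm{\vec{\hat{x}}_i}$, but $\vec{\hat{x}}_0=\vec{0}$; the paper avoids this by running the recursion in $\norm{\vec{r}_i}$ and normalizing only at the end (cf. \cref{sec:proof-main-meta}), using $\norm{\vec{x}-\vec{\hat{x}}_t}\le\norm{\vec{x}}/2$ to convert $\norm{\vec{x}}$ into $\norm{\vec{\hat{x}}_t}$ at the final step.
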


\begin{algorithm}[t]
	\caption{Lanczos method for linear solves $\mat{M}^{-1}\vec{c}$} \label{alg:lanczos}
	\begin{algorithmic}[1]
		\Require Matvec $\vec{z} \mapsto \mat{M}\vec{z}$ subroutine \Call{Apply}{}, right-hand side $\vec{c}$, iterations $q$
		\Ensure Approximate solution $\vec{y} \approx \mat{M}^{-1}\vec{c}$
		\State $\vec{q}_0 \gets \vec{0}$, $\gamma \gets \norm{\vec{c}}$, $\vec{q}_1\gets \vec{c}/\gamma$, $\beta_1 \gets 0$
        \For{$i = 1,\ldots,k$}
        \State $\vec{q}_{i+1} \gets \Call{Apply}{\vec{q}_i} - \beta_i \vec{q}_{i-1}$
        \State $\alpha_i \gets \vec{q}_{i+1}^\top \vec{q}_i^{\vphantom{\top}}$
        \State $\vec{q}_{i+1} \gets \vec{q}_{i+1} - \alpha_i \vec{q}_i$
        \State $\beta_{i+1} \gets \norm{\vec{q}_{i+1}}$
        \State \textbf{if} $\beta_{i+1} = 0$, set $k\gets i$ and \textbf{break}
        \State $\vec{q}_{i+1} \gets \vec{q}_{i+1} / \beta_{i+1}$
        \EndFor
        \State $\mat{T} \gets \begin{bmatrix} \alpha_1 & \beta_2 \\ 
        \beta_2 & \alpha_2 & \ddots \\
        &\ddots & \ddots & \beta_k \\ && \beta_k & \alpha_k\end{bmatrix}$, $\mat{Q} \gets \begin{bmatrix} \vec{q}_1 & \cdots & \vec{q}_k \end{bmatrix}$
        \State $\vec{y} \gets \gamma \cdot \mat{Q} (\mat{T}^{-1}\mathbf{e}_1)$
	\end{algorithmic}
\end{algorithm}

Existing techniques are not yet able to establish the guarantee \cref{eq:iterative-solver-guarantee} for the LSQR or CG methods.
To obtain a method with end-to-end guarantees, we may use the Lanczos procedure (\cref{alg:lanczos}) as the \Call{IterativeSolver}{} routine.
We remind the reader that LSQR, CG on the normal equations, and Lanczos on the normal equations are all equivalent in exact arithmetic \cite{Gre97a}.
The main disadvantage of Lanczos over CG and LSQR is storage cost; for $q$ iterations, standard implementations of Lanczos require $\order(qn)$ auxilliary storage, while CG and LSQR require $\order(n)$ auxilliary storage.
Let us emphasize that we are implementing Lanczos \emph{without reorthogonalization}, which is known to be stable for many tasks \cite[ch.~4]{Che24}.
For other uses of Lanczos, reorthogonalization may be necessary to remedy issues associated with loss of orthogonality (or obtain improved spectrum-adaptive convergence \cite[Fig.~5]{CHLZ25}).

\begin{theorem}[Backward stability for meta-solver with Lanczos] \label{thm:main-lanczos}
    Let $\mat{A}$ and $\mat{P}$ be as in \cref{thm:main-meta}, and execute the meta-solver \cref{alg:meta-algorithm} using the Lanczos algorithm \cref{alg:lanczos} as the linear solver with $q = \order(\log(1/(\kappa u)))$ iterations.
    With $t = \order(\log(1/u) / \log(1/(\kappa u))$ iterative refinement steps, \cref{alg:meta-algorithm} is backward stable.
    The total cost is $\order(\log(1/u))$ matvecs with $\mat{A}$, $\mat{A}^\top$, $\mat{P}^{-1}$, and $\mat{P}^{-\top}$ and $\order(n\log(1/u))$ additional arithmetic operations.
\end{theorem}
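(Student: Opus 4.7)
The plan is to deduce \cref{thm:main-lanczos} as a corollary of \cref{thm:main-meta} by verifying that the Lanczos routine \cref{alg:lanczos}, when run for $q = \order(\log(1/(\kappa u)))$ iterations, satisfies the solver guarantee \cref{eq:iterative-solver-guarantee} for the preconditioned normal equations matrix $\mat{M} \coloneqq \mat{P}^{-\top}\mat{A}^\top\mat{A}\mat{P}^{-1}$. Once \cref{eq:iterative-solver-guarantee} is established, \cref{thm:main-meta} immediately yields that $t = \order(\log(1/u)/\log(1/(\kappa u)))$ refinement steps suffice to drive the backward error in \cref{eq:meta-error-guarantee} below $\lesssim u$; hence the main technical content is a finite-precision analysis of Lanczos on this particular linear system.

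The first observation is that $\mat{M}$ is symmetric positive definite with a constant-bounded condition number $\cond(\mat{M}) = \cond(\mat{A}\mat{P}^{-1})^2 \le C^2$, by \cref{eq:good-prec}. Moreover, since the primitives $\mat{A}\vec{z}$, $\mat{A}^\top\vec{z}$, $\mat{P}^{-1}\vec{z}$, $\mat{P}^{-\top}\vec{z}$ are assumed backward stable, a short calculation using \cref{eq:AP-norms} and \cref{eq:com-scale} shows that the composite matvec $\vec{z}\mapsto \mat{P}^{-\top}(\mat{A}^\top(\mat{A}(\mat{P}^{-1}\vec{z})))$ can be written as $\fl(\mat{M}\vec{z}) = (\mat{M} + \mat{\Delta M}(\vec{z}))\vec{z}$ with a vector-dependent backward error $\norm{\mat{\Delta M}(\vec{z})} \lesssim u\norm{\mat{M}}$. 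So the Lanczos iteration acts on a sequence of perturbed but well-conditioned SPD operators whose spectrum sits in an interval of width $\order(\norm{\mat{M}})$ bounded away from zero.

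The heart of the argument is the finite-precision analysis of the Lanczos/CG iteration on $\mat{M}$. I would invoke the classical analysis of Paige and Greenbaum, as presented e.g.\ in \cite{Gre97a}, which shows that finite-precision Lanczos without reorthogonalization behaves, to within $\lesssim u$ perturbations of the Ritz values, like exact-arithmetic Lanczos on a slightly larger matrix with essentially the same spectrum. In particular, the resulting CG-type error estimate gives an $\mat{M}$-norm error decaying at rate $\bigl((\sqrt{C^2}-1)/(\sqrt{C^2}+1)\bigr)^q$, with accumulated roundoff of size $\lesssim u \cdot \kappa(\mat{M})^{\order(1)} \norm{\vec{c}_i}/\sigma_{\min}(\mat{M})$. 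Translating from $\mat{M}$-norm to Euclidean norm using $\norm{\cdot} \le \sigma_{\min}(\mat{M})^{-1/2}\norm{\cdot}_{\mat{M}}$, absorbing the factor $\norm{\mat{M}^{-1}} \asymp \norm{\mat{A}^{-1}}^2\norm{\mat{P}}^2 \lesssim 1$ enabled by \cref{eq:AP-norms,eq:com-scale}, and choosing $q$ large enough that the geometric decay beats the $\kappa u$ target, one obtains \cref{eq:iterative-solver-guarantee}. The constant-conditioning of $\mat{M}$ makes the convergence rate $\order(1)$, so $q = \order(\log(1/(\kappa u)))$ iterations suffice.

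The main obstacle is plugging together the finite-precision Lanczos bound with the per-matvec backward error $\mat{\Delta M}(\vec{z})$: the standard statements assume a fixed matrix, and here the applied operator varies with the input. I would handle this by observing that $\norm{\mat{\Delta M}(\vec{z})} \lesssim u\norm{\mat{M}}$ uniformly in $\vec{z}$, so the analysis of \cite{Gre97a} applies verbatim to an enlarged perturbation level $\lesssim u$, which does not change the qualitative conclusion. Finally, for the cost accounting, $qt = \order(\log(1/(\kappa u))) \cdot \order(\log(1/u)/\log(1/(\kappa u))) = \order(\log(1/u))$ total Lanczos steps, each requiring $\order(1)$ applications of the four primitives and $\order(n)$ auxiliary work for the three-term recurrence and the final tridiagonal solve; the $\lceil\log n\rceil$ power iteration steps of \cref{alg:lsqr-ir}, if included, add only lower-order terms.
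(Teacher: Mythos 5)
Your overall strategy matches the paper's: reduce to \cref{thm:main-meta} by verifying that Lanczos satisfies \cref{eq:iterative-solver-guarantee}, and observe that $\mat{M}$ is SPD with constant condition number so a finite-precision Lanczos analysis gives geometric convergence in $\order(1)$ rate. However, there is a genuine gap in the per-matvec error estimate, which is the technical heart of the reduction.

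You claim the composite matvec $\vec{z}\mapsto \mat{P}^{-\top}(\mat{A}^\top(\mat{A}(\mat{P}^{-1}\vec{z})))$ incurs a perturbation of size $\norm{\mat{\Delta M}(\vec{z})}\lesssim u\norm{\mat{M}}$, i.e., that it attains the ordinary machine-precision stability model. This is false, and the paper's \cref{lem:interleaved} exists precisely to quantify the actual error, which is $\norm{\err(\mat{P}^{-\top}(\mat{A}^\top(\mat{A}(\mat{P}^{-1}\vec{z}))))}\lesssim \kappa u\,\norm{\vec{z}}$, a factor of $\kappa$ larger. The loss comes from interleaving: each individual solve with $\mat{P}$ (or $\mat{P}^\top$) is backward stable, so it introduces a perturbation $\mat{\Delta P}$ of size $\lesssim\norm{\mat{A}}u$, but this perturbation then gets hit by $\mat{P}^{-\top}$ (resp.\ $\mat{P}^{-1}$), whose norm is $\asymp\kappa/\norm{\mat{A}}$ by \cref{eq:AP-norms}; the net contribution is $\norm{\mat{P}^{-\top}}\cdot\norm{\mat{\Delta P}}\asymp\kappa u$, not $u$. (The first-order sketch at the start of \cref{sec:proof-interleaved} makes this explicit.) In the paper's phrasing, the apply operation behaves like a matvec with $\mat{M}$ performed in a \emph{lower effective precision} $\tilde{u}\lesssim\kappa u$, and this is what is fed into the Lanczos stability result (\cref{fact:lanczos}) to conclude that $q=\order(\log(1/\tilde{u}))=\order(\log(1/(\kappa u)))$ iterations produce accuracy $\lesssim\kappa u\norm{\vec{c}_i}$ --- which is exactly \cref{eq:iterative-solver-guarantee} and no better. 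If your $u$-level bound were correct, Lanczos would give $u$-level accuracy, the solver guarantee would be a factor $\kappa$ better than \cref{eq:iterative-solver-guarantee} requires, and the iteration count would scale as $\order(\log(1/u))$, inconsistent with the $q=\order(\log(1/(\kappa u)))$ claimed in the statement. So the missing $\kappa$ is not a constant-chasing detail; it is the quantitative phenomenon the whole scheme is designed to compensate for.

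Two smaller remarks. First, the identity $\norm{\mat{M}^{-1}}\asymp\norm{\mat{A}^{-1}}^2\norm{\mat{P}}^2$ is wrong: the right-hand side is $\asymp\kappa^2$ by \cref{eq:AP-norms}, whereas $\norm{\mat{M}^{-1}}=\norm{\mat{P}\mat{A}^{-1}}^2\le C^2\lesssim 1$ by \cref{eq:good-prec,eq:com-scale}; the product of norms is only an upper bound, not an equivalence, and here it is a very lossy one. Second, you invoke the Paige/Greenbaum finite-precision Lanczos theory via \cite{Gre97a}, whereas the paper invokes the more recent bound from \cite{MMS18} (restated as \cref{fact:lanczos}, via \cite{EMN24}) precisely because that version delivers a clean forward-error guarantee of the form \cref{eq:lanczos-conclusion} with prefactors polynomial in $\log(1/\tilde{u})$ for a well-conditioned matrix given an inexact matvec oracle; the paper notes that analyses such as \cite{DK92,DGK98} could also be used modulo some additional work. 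Whichever Lanczos analysis you use, the hypothesis you feed it must be the $\tilde{u}\lesssim\kappa u$ effective-precision matvec bound, not a $u$-level one.
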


\subsection{Roadmap}
Our proof strategy draws on the approach and techniques developed by a subset of the present authors and Maike Meier in \cite{EMN24}.
We opt to provide a largely self-contained treatment, and our focus on square systems allows for dramatic simplification over corresponding results in \cite{EMN24}.

We begin by establishing \cref{thm:main-meta}.
The first core ingredient is the following lemma, which controls the error when we interleave multiplications with $\mat{A}$, its inverse-preconditioner $\mat{P}^{-1}$, and their adjoints.

\begin{lemma}[Interleaved multiplies with matrix and its preconditioner] \label{lem:interleaved}
    Let $\mat{A}$ and $\mat{P}$ be as in \cref{thm:main-meta}, and let $\vec{z}$ be exactly representable.
    Then
    \begin{equation}
        \bigl\|\err(\mat{P}^{-\top} (\mat{A}^\top \vec{z}))\bigr\|, \bigl\|\err(\mat{P}^{-\top} (\mat{A}^\top (\mat{A}(\mat{P}^{-1}\vec{z}))))\bigr\| \lesssim \kappa u \norm{\vec{z}}. \label{eq:interleaved}
    \end{equation}
\end{lemma}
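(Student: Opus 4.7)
\medskip\noindent\textbf{Proof plan.} Both bounds have the same flavor, so I would first state the backward stability hypothesis for the primitives in the form $\fl(\mat{A}\vec{y}) = (\mat{A}+\mat{\Delta A})\vec{y}$ with $\|\mat{\Delta A}\|\lesssim u\|\mat{A}\|$ (and analogously for $\mat{A}^\top$, $\mat{P}^{-1}$, $\mat{P}^{-\top}$), applied to whichever intermediate vector is being multiplied. Since $\vec{z}$ is exactly representable, the outer composition in floating point is the \emph{exact} product of these perturbed operators applied to $\vec{z}$, so the entire error analysis reduces to bounding the error between a product of four (respectively two) perturbed operators and the ideal product.

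The second inequality is the harder of the two, so let me describe the plan there. I would write
\[
\fl(\mat{P}^{-\top}(\mat{A}^\top(\mat{A}(\mat{P}^{-1}\vec{z})))) = (\mat{P}^{-\top}+\mat{\Delta P}^{-\top}_2)(\mat{A}^\top+\mat{\Delta A}^\top_2)(\mat{A}+\mat{\Delta A}_1)(\mat{P}^{-1}+\mat{\Delta P}^{-1}_1)\vec{z},
\]
then expand and subtract the ideal product $\mat{P}^{-\top}\mat{A}^\top\mat{A}\mat{P}^{-1}\vec{z}$. The first-order part is a sum of four terms, each obtained by replacing exactly one factor by its perturbation. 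The key trick, and the main subtlety I want to flag, is that the four terms must be \emph{grouped} so that $\mat{A}\mat{P}^{-1}$ (or its transpose) appears as an unbroken product; by \cref{eq:com-scale} that product has norm $\asymp 1$, whereas $\|\mat{A}\|\|\mat{P}^{-1}\| \asymp \kappa$ by \cref{eq:AP-norms}. For example,
\[
\bigl\|\mat{P}^{-\top}\mat{\Delta A}^\top_2(\mat{A}\mat{P}^{-1})\vec{z}\bigr\| \le \|\mat{P}^{-1}\| \cdot \|\mat{\Delta A}^\top_2\| \cdot \|\mat{A}\mat{P}^{-1}\|\cdot\|\vec{z}\| \lesssim \|\mat{P}^{-1}\|\cdot u\|\mat{A}\|\cdot\|\vec{z}\| \lesssim \kappa u\|\vec{z}\|,
\]
and each of the other three first-order terms admits the same bound via an analogous grouping (always keeping one factor of $\mat{A}\mat{P}^{-1}$ or $\mat{P}^{-\top}\mat{A}^\top$ intact).

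The remaining work is to check that the higher-order terms are negligible: each additional perturbation factor contributes an extra $u\|\mat{A}\|$ or $u\|\mat{P}^{-1}\|$, so a $k$th-order term is bounded by $(\kappa u)^{k-1}\cdot \kappa u\|\vec{z}\|$, which is dominated by $\kappa u \|\vec{z}\|$ under the assumption $\kappa u \ll 1$ from \cref{eq:numerically-full-rank}. Summing the four first-order bounds and the negligible tail gives the second inequality in \cref{eq:interleaved}. The first inequality is then just the two-factor version of the same argument: expand $(\mat{P}^{-\top}+\mat{\Delta P}^{-\top})(\mat{A}^\top+\mat{\Delta A}^\top)\vec{z}$, bound $\|\mat{\Delta P}^{-\top}\mat{A}^\top\vec{z}\|$ and $\|\mat{P}^{-\top}\mat{\Delta A}^\top\vec{z}\|$ separately by $\lesssim \kappa u\|\vec{z}\|$, and absorb the $O(u^2)$ cross term. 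The one place to be careful, and the main obstacle beyond routine bookkeeping, is resisting the temptation to apply triangle inequalities too early in the second bound: if the four perturbation factors are treated independently without keeping $\mat{A}\mat{P}^{-1}$ grouped, one picks up an extra factor of $\kappa$ and obtains the weaker bound $\kappa^2 u\|\vec{z}\|$, which would be insufficient for the downstream analysis.
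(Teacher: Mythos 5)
Your strategy---expand a product of perturbed operators and bound each first-order term by grouping $\mat{A}\mat{P}^{-1}$ (or $\mat{P}^{-\top}\mat{A}^\top$) as a unit---is the right idea and mirrors the informal first-order explanation the paper gives before its formal proof. But there is a genuine gap in how you state the stability hypothesis for the preconditioner primitives. Writing ``analogously for $\mat{P}^{-1}$'' suggests $\fl(\mat{P}^{-1}\vec{y}) = (\mat{P}^{-1}+\mat{E})\vec{y}$ with $\norm{\mat{E}} \lesssim u\norm{\mat{P}^{-1}}$, but that is not what backward stability of a \emph{solve} delivers. The hypothesis (and what \cref{prop:backward} records) is $\fl(\mat{P}^{-1}\vec{y}) = (\mat{P}+\mat{\Delta P})^{-1}\vec{y}$ with $\norm{\mat{\Delta P}} \lesssim u\norm{\mat{P}}$, whose forward perturbation is $(\mat{P}+\mat{\Delta P})^{-1}-\mat{P}^{-1} = -\mat{P}^{-1}\mat{\Delta P}(\mat{P}+\mat{\Delta P})^{-1}$, a quantity of operator norm $\lesssim u\,\kappa\,\norm{\mat{P}^{-1}}$, not $\lesssim u\norm{\mat{P}^{-1}}$. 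Plugging the correct operator-norm bound into your grouped first-order term $\mat{P}^{-\top}\mat{A}^\top \cdot \mat{A}\mat{E}\cdot\vec{z}$ gives $\norm{\mat{A}}\cdot u\kappa\norm{\mat{P}^{-1}}\cdot\norm{\vec{z}} \lesssim \kappa^2 u\norm{\vec{z}}$, a factor of $\kappa$ worse than \cref{eq:interleaved}, and this loss is not recoverable by a cleverer grouping because $\mat{A}\mat{E}$ is not $\mat{A}\mat{P}^{-1}$.

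The fix is to retain the \emph{structure} of the solve error rather than only its norm: $\err(\mat{P}^{-1}\vec{z}) = \mat{P}^{-1}\vec{w}$ for a vector $\vec{w}$ with $\norm{\vec{w}} \lesssim \norm{\mat{A}}\norm{\mat{P}^{-1}\vec{z}}u$, so that the exact $\mat{P}^{-1}$ factor sits on the \emph{outside} of the error. Then the subsequent multiply by $\mat{A}$ forms the well-conditioned $\mat{A}\mat{P}^{-1}$, which is precisely what restores the $\kappa u$ bound. This is the content of \cref{prop:backward} and is the step your expansion silently assumes. (Equivalently, in your first-order expansion, write $(\mat{P}+\mat{\Delta P}_4)^{-1} \approx \mat{P}^{-1} - \mat{P}^{-1}\mat{\Delta P}_4\mat{P}^{-1}$ and group the left $\mat{P}^{-1}$ with the incoming $\mat{A}$; this is exactly what the paper's informal expansion does.) One further structural difference worth noting: the paper's formal proof does not expand a single four-factor product but instead propagates errors sequentially --- bound $\err(\mat{A}^\top\vec{z})$, then $\err(\mat{P}^{-\top}(\mat{A}^\top\vec{z}))$ via \cref{prop:backward}, similarly $\err(\mat{A}\mat{P}^{-1}\vec{z})$, and finally chain the two --- which avoids the bookkeeping of higher-order cross terms that your approach defers to the end.
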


We establish this result in \cref{sec:proof-interleaved}.
Using these bounds, we establish a recurrence for the error, whose proof appears in \cref{sec:proof-error-formula}.

\begin{lemma}[Single-step error bound] \label{lem:error-formula}
    Let $\mat{A}$ and $\mat{P}$ be as in \cref{thm:main-meta}.
    Then
    \begin{equation} \label{eq:r-bound}
        \norm{\vec{r}_{i+1}} \lesssim \kappa u \norm{\vec{r}_i} +\norm{\mat{A}}\norm{\vec{x}}u \quad \text{where $\vec{r}_i = \vec{b} - \mat{A}\vec{\hat{x}}_i$.}
    \end{equation}
\end{lemma}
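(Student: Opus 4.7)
The plan is to trace floating-point errors through one pass of \cref{alg:meta-algorithm}, express the new residual $\vec{r}_{i+1}=\mat{A}(\vec{x}-\vec{\hat{x}}_{i+1})$ in terms of those errors, and bound everything by the right-hand side of \cref{eq:r-bound}. From \cref{eq:good-prec,eq:com-scale} I would first extract the operator-norm facts that drive the analysis: $\norm{\mat{A}\mat{P}^{-1}}\asymp 1$, $\norm{\mat{P}^{-\top}\mat{A}^\top}\asymp 1$, and $\norm{(\mat{P}^{-\top}\mat{A}^\top\mat{A}\mat{P}^{-1})^{-1}}\asymp 1$. These let us propagate errors across $\mat{A}$, its transpose, and the normal-equations operator without picking up any factor of $\kappa$.

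The first half of the argument establishes the identity
\begin{equation*}
    \vec{\hat{\delta y}}_i = \mat{P}(\vec{x}-\vec{\hat{x}}_i) + \vec{h}_i, \qquad \norm{\vec{h}_i}\lesssim \kappa u\norm{\vec{r}_i} + \norm{\mat{A}}\norm{\vec{x}}u,
\end{equation*}
motivated by the exact-arithmetic equality $(\mat{P}^{-\top}\mat{A}^\top\mat{A}\mat{P}^{-1})^{-1}\mat{P}^{-\top}\mat{A}^\top\vec{r}_i = \mat{P}\mat{A}^{-1}\vec{r}_i = \mat{P}(\vec{x}-\vec{\hat{x}}_i)$. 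To obtain it, I would first write $\vec{\hat{r}}_i=\vec{r}_i+\vec{e}_1$ using the standard matvec-and-subtract error bound $\norm{\vec{e}_1}\lesssim u\norm{\mat{A}}(\norm{\vec{x}}+\norm{\vec{\hat{x}}_i})$; the estimate $\norm{\vec{\hat{x}}_i}\le\norm{\vec{x}}+\norm{\mat{A}^{-1}}\norm{\vec{r}_i}$ then converts the $\vec{\hat{x}}_i$ contribution into the desired $\norm{\mat{A}}\norm{\vec{x}}u+\kappa u\norm{\vec{r}_i}$ form. Next, \cref{lem:interleaved} controls the interleaved multiply $\vec{\hat{c}}_i=\fl(\mat{P}^{-\top}(\mat{A}^\top\vec{\hat{r}}_i))$, and combining with $\norm{\mat{P}^{-\top}\mat{A}^\top}\asymp 1$ to propagate $\vec{e}_1$ yields $\vec{\hat{c}}_i=\mat{P}^{-\top}\mat{A}^\top\vec{r}_i+\vec{g}_i$ with $\norm{\vec{g}_i}\lesssim \kappa u\norm{\vec{r}_i}+\norm{\mat{A}}\norm{\vec{x}}u$. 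Finally, invoking the iterative-solver guarantee \cref{eq:iterative-solver-guarantee} on $\vec{\hat{c}}_i$, along with $\norm{\mat{M}^{-1}}\asymp 1$ and $\norm{\vec{\hat{c}}_i}\lesssim \norm{\vec{r}_i}+\norm{\mat{A}}\norm{\vec{x}}u$, produces the stated expression for $\vec{\hat{\delta y}}_i$.

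The second half propagates $\vec{\hat{\delta y}}_i$ through the final two steps. Writing the backward-stable preconditioner solve as $\fl(\mat{P}^{-1}\vec{\hat{\delta y}}_i)=\mat{P}^{-1}\vec{\hat{\delta y}}_i+\vec{e}_4$ and the subsequent addition as $\vec{\hat{x}}_{i+1}=\vec{\hat{x}}_i+\mat{P}^{-1}\vec{\hat{\delta y}}_i+\vec{e}_4+\vec{e}_5$, substitution collapses the forward error to $\vec{x}-\vec{\hat{x}}_{i+1}=-\mat{P}^{-1}\vec{h}_i-\vec{e}_4-\vec{e}_5$, so $\vec{r}_{i+1}=-\mat{A}\mat{P}^{-1}\vec{h}_i-\mat{A}\vec{e}_4-\mat{A}\vec{e}_5$. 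The \emph{main obstacle} is that the naive bound $\norm{\mat{P}^{-1}\vec{h}_i}\le\norm{\mat{P}^{-1}}\norm{\vec{h}_i}$ loses a factor $\norm{\mat{P}^{-1}}\asymp \kappa/\norm{\mat{A}}$ and would inflate the $\kappa u$ prefactor to $\kappa^2 u$. The resolution exploits the fact that the lemma only concerns the \emph{residual}, not the forward error, so every appearance of $\mat{P}^{-1}$ can be paired with the outer factor of $\mat{A}$ and absorbed into $\mat{A}\mat{P}^{-1}$ of norm $\asymp 1$. This gives the dominant bound $\norm{\mat{A}\mat{P}^{-1}\vec{h}_i}\lesssim \norm{\vec{h}_i}$, and an analogous manipulation--writing the preconditioner solve as $(\mat{P}+\mat{\Delta P})^{-1}\vec{\hat{\delta y}}_i$ with $\norm{\mat{\Delta P}}\lesssim u\norm{\mat{P}}$--gives $\norm{\mat{A}\vec{e}_4}\lesssim \kappa u\norm{\vec{\hat{\delta y}}_i}\lesssim \kappa u\norm{\vec{r}_i}$ up to higher-order terms. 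The term $\norm{\mat{A}\vec{e}_5}\lesssim \norm{\mat{A}}u(\norm{\vec{\hat{x}}_i}+\norm{\vec{\hat{\delta x}}_i})$ is controlled by the $\vec{\hat{x}}_i$ bound used above together with $\norm{\mat{A}}\norm{\vec{\hat{\delta x}}_i}\lesssim \norm{\mat{A}\vec{\hat{\delta x}}_i}\cdot\kappa \lesssim \kappa\norm{\vec{\hat{\delta y}}_i}$ (using $\mat{A}\vec{\hat{\delta x}}_i=\mat{A}\mat{P}^{-1}\vec{\hat{\delta y}}_i+\mat{A}\vec{e}_4$ and $\norm{\mat{A}\mat{P}^{-1}}\asymp 1$). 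Summing the three contributions yields \cref{eq:r-bound}.
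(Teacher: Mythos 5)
Your proof is correct and follows essentially the same strategy as the paper's: both decompose $\vec{\hat{\delta y}}_i = \mat{P}(\vec{x}-\vec{\hat{x}}_i) + \vec{h}_i$ via \cref{lem:interleaved} and the iterative-solver guarantee \cref{eq:iterative-solver-guarantee}, and both rely on the crucial observation that every $\mat{P}^{-1}$ appearing in the final error expression can be paired with the outer $\mat{A}$ to form $\mat{A}\mat{P}^{-1}$ (of norm $\asymp 1$), so that the residual picks up only $\kappa u$ rather than $\kappa^2 u$. The paper packages the $(\mat{P}+\mat{\Delta P})^{-1}$ manipulation you perform directly into \cref{prop:backward} (writing the solve error in the factored form $\norm{\mat{A}}\norm{\mat{P}^{-1}\vec{z}}\cdot\mat{P}^{-1}\vec{e}$), but the underlying mechanism is identical.
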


Finally, using these tools, we establish \cref{thm:main-meta} in \cref{sec:proof-main-meta}.
The proof of \cref{thm:main-lanczos} follows from \cref{thm:main-meta} after establishing \cref{eq:iterative-solver-guarantee} for the Lanczos method, which is done in \cref{sec:proof-main-lanczos}.

\subsection{Proof of \cref{lem:interleaved}} \label{sec:proof-interleaved}

Before providing the formal proof of \cref{lem:interleaved}, we give a more intuitive explanation of using first-order perturbation theory.
Assuming backward stable operations, the product $\mat{P}^{-\top}\mat{A}^\top\mat{A}\mat{P}^{-1}\vec{z}$ is computed up to a backward error $\mat{\Delta}_i$ of machine-precision size $\norm{\mat{\Delta}_i} \lesssim \norm{\mat{A}}u$ to each occurence of $\mat{A}$ and $\mat{P}$.
We have the first-order expansion
\begin{multline*}
    (\mat{P}^\top + \mat{\Delta}_1)^{-1}(\mat{A}^\top + \mat{\Delta}_2)(\mat{A} + \mat{\Delta}_3)(\mat{P} + \mat{\Delta}_4)^{-1} \\ = \mat{M} - \mat{P}^{-\top} \mat{\Delta}_1 \mat{M} + \mat{P}^{-\top} \mat{\Delta}_2 \mat{A}\mat{P}^{-1} + \mat{P}^{-\top} \mat{A}^\top \mat{\Delta}_3 \mat{P}^{-1} - \mat{M}\mat{\Delta}_4\mat{P}^{-1} + \order(u^2).
\end{multline*}
By \cref{eq:com-scale,eq:AP-norms}, $\mat{M}$, $\mat{A}\mat{P}^{-1}$, and $\mat{P}^{-\top}\mat{A}^\top$ have norm $\lesssim 1$ and, $\mat{P}^{-1}$ and $\mat{P}^{-\top}$ have norm $\lesssim \kappa / \norm{\mat{A}}$.
Thus,
\begin{equation*}
    \norm{(\mat{P}^\top + \mat{\Delta}_1)^{-1}(\mat{A}^\top + \mat{\Delta}_2)(\mat{A} + \mat{\Delta}_3)(\mat{P} + \mat{\Delta}_4)^{-1} - \mat{M}} \lesssim \kappa u + \order(u^2);
\end{equation*}
the conclusion of \cref{lem:interleaved} is recovered, up to first order.
The interested reader may wish to repeat this analysis with the matrix
\begin{equation*}
    \mat{M}' = \mat{P}^{-1}\mat{P}^{-\top}\mat{A}^\top\mat{A},
\end{equation*}
which yields a larger error of size $\kappa^2 u$.

To prove \cref{lem:interleaved}, we first import stability results for vector arithmetic and matrix multiplication \cite[secs.~2--3]{Hig02}: 

\begin{fact}[Basic stability results] \label{fact:basic-stability}
    For exactly representable $\vec{w},\vec{z}\in\real^n$ and $\gamma \in \real$, we have
    \begin{equation*}
        \norm{\err(\vec{z} \pm \vec{w})}\lesssim \norm{\vec{z} \pm \vec{w}}u, \quad \norm{\err(\gamma\cdot \vec{z})} \lesssim |\gamma| \norm{\vec{z}} u.
    \end{equation*}
    For backward stable matrix multiplication by a matrix $\mat{B}$, we have
    \begin{equation*}
        \norm{\err(\mat{B}\vec{z})} \lesssim \norm{\mat{B}} \norm{\vec{z}} u, \quad \big\|\smash{\err(\mat{B}^\top\vec{z})}\big\| \lesssim \norm{\mat{B}} \norm{\vec{z}} u.
    \end{equation*}
\end{fact}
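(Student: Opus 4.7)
The plan is to derive each bound directly from the standard model of floating-point arithmetic invoked in \cref{sec:notation}, namely $\fl(a \circ b) = (a \circ b)(1+\delta)$ with $|\delta| \le u$ for each elementary operation $\circ \in \{+,-,\times,/\}$. The three claims can each be dispatched with a short argument, and none of them requires tools beyond classical Higham-style error analysis (\cite[chs.~2--3]{Hig02}); the role of this fact is to collect routine building blocks into a single place so that later proofs (especially \cref{lem:interleaved}) can cite them uniformly.

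For the vector operations $\vec{z} \pm \vec{w}$ and $\gamma \vec{z}$, the argument is componentwise. Since $\vec{z}$, $\vec{w}$, and $\gamma$ are exactly representable, each entry of the output is produced by a single floating-point operation, so the standard model gives $\fl(\vec{z} \pm \vec{w})_i = (z_i \pm w_i)(1 + \delta_i)$ and $\fl(\gamma \vec{z})_i = \gamma z_i (1 + \delta_i')$ with $|\delta_i|, |\delta_i'| \le u$. I would then write $\err(\vec{z}\pm\vec{w}) = \mat{D}(\vec{z}\pm\vec{w})$ and $\err(\gamma\vec{z}) = \mat{D}'(\gamma\vec{z})$ for diagonal matrices $\mat{D},\mat{D}'$ with $\norm{\mat{D}}, \norm{\mat{D}'} \le u$, whence submultiplicativity of the Euclidean norm immediately yields the stated bounds with absolute prefactor $1$ (a fortiori consistent with $\lesssim$).

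For matrix multiplication $\mat{B}\vec{z}$, the assumption of backward stability is precisely that the chosen implementation produces $\fl(\mat{B}\vec{z}) = (\mat{B} + \mat{\Delta})\vec{z}$ with $\norm{\mat{\Delta}} \lesssim \norm{\mat{B}}u$. The claim then collapses to a one-line computation: $\norm{\err(\mat{B}\vec{z})} = \norm{\mat{\Delta}\vec{z}} \le \norm{\mat{\Delta}}\norm{\vec{z}} \lesssim \norm{\mat{B}}\norm{\vec{z}}u$. The adjoint bound is identical, applied to the assumed backward stable implementation of $\vec{z}\mapsto \mat{B}^\top \vec{z}$. For the textbook implementation based on inner products, the classical bound $\norm{\mat{\Delta}} \le \gamma_n \norm{\mat{B}}$ with $\gamma_n = nu/(1-nu)$ (\cite[Thm.~3.5]{Hig02}) furnishes the required hypothesis with prefactor $\order(n)$.

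The main subtlety is conceptual rather than technical: the fact hides a choice of prefactor inside the $\lesssim$ notation. Different matvec algorithms (standard, Strassen-type, randomized sketching) come with different polynomial dependencies on $n$ in their backward error constants, and these propagate silently through every downstream inequality. Fortunately, the convention in \cref{sec:notation} absorbs any polynomial factor in $n$ and $\log(1/u)$ into $\lesssim$, so no effort need be spent tracking sharp constants; we inherit whatever backward stability constant the chosen matvec routine supplies, in keeping with the Wilkinson quote that opens \cref{sec:notation}.
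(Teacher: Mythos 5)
Your proposal is correct and matches the paper's treatment: the paper states this Fact without proof, importing it directly from \cite[secs.~2--3]{Hig02}, and your componentwise argument for the vector operations together with the one-line computation $\norm{\err(\mat{B}\vec{z})} = \norm{\mat{\Delta}\vec{z}} \le \norm{\mat{\Delta}}\norm{\vec{z}} \lesssim \norm{\mat{B}}\norm{\vec{z}}u$ under the backward-stability hypothesis is exactly the standard derivation those citations encapsulate. The only harmless imprecision is that the classical result you cite gives a componentwise bound $|\mat{\Delta}| \le \gamma_n |\mat{B}|$, whose conversion to a spectral-norm bound costs an additional factor polynomial in $n$, but this is absorbed by the paper's $\lesssim$ convention, so nothing is lost.
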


Our next result provides a useful result for backward stable linear solves:

\begin{proposition}[Backward stable solves] \label{prop:backward}
    Let $\mat{P}$ be as in \cref{thm:main-meta}, and let $\vec{z} \in \real^n$ be exactly representable.
    Then
    \begin{equation*}
        \err(\mat{P}^{-1}\vec{z}) = \norm{\mat{A}}\norm{\smash{\mat{P}^{-1}\vec{z}}} \cdot \mat{P}^{-1}\vec{e}, \quad \err(\mat{P}^{-\top}\vec{z}) = \norm{\mat{A}}\big\|\mat{P}^{-\top}\vec{z}\big\| \cdot \mat{P}^{-\top}\vec{e}'.
    \end{equation*}
    We remind the reader of the ``$\vec{e}$'' notation defined in \cref{sec:notation}.
\end{proposition}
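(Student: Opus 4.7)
The plan is to unpack the backward stability of the solve and use the preconditioner quality to pull out a clean factorization of the error through $\mat{P}^{-1}$. Since the solve routine is assumed backward stable, there exists a perturbation $\mat{\Delta}$ with $\norm{\mat{\Delta}} \lesssim \norm{\mat{P}}u$ such that $\fl(\mat{P}^{-1}\vec{z}) = (\mat{P} + \mat{\Delta})^{-1}\vec{z}$. By the standing assumption \cref{eq:AP-norms}, $\norm{\mat{P}}\asymp \norm{\mat{A}}$, so $\norm{\mat{\Delta}} \lesssim \norm{\mat{A}}u$.

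Next I would apply the standard resolvent identity
\begin{equation*}
(\mat{P} + \mat{\Delta})^{-1} - \mat{P}^{-1} = -\mat{P}^{-1}\mat{\Delta}(\mat{P}+\mat{\Delta})^{-1},
\end{equation*}
which rewrites the error as
\begin{equation*}
\err(\mat{P}^{-1}\vec{z}) = -\mat{P}^{-1}\mat{\Delta}\,\fl(\mat{P}^{-1}\vec{z}) = \mat{P}^{-1}\vec{w}, \quad \vec{w} \coloneqq -\mat{\Delta}\,\fl(\mat{P}^{-1}\vec{z}).
\end{equation*}
This is already the structural form claimed in the statement; what remains is to show $\norm{\vec{w}} \lesssim \norm{\mat{A}}\norm{\mat{P}^{-1}\vec{z}}u$, so that defining $\vec{e} \coloneqq \vec{w}/(\norm{\mat{A}}\norm{\mat{P}^{-1}\vec{z}})$ yields a vector of norm $\lesssim u$.

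The key step, and the main obstacle, is to bound $\norm{\fl(\mat{P}^{-1}\vec{z})}$ in terms of $\norm{\mat{P}^{-1}\vec{z}}$; the ratio is not unconditionally bounded, and it is here that the near-perfect preconditioner assumption enters. From the identity $(\Id + \mat{P}^{-1}\mat{\Delta})\,\fl(\mat{P}^{-1}\vec{z}) = \mat{P}^{-1}\vec{z}$ together with the bound $\norm{\mat{P}^{-1}\mat{\Delta}}\le \norm{\mat{P}^{-1}}\norm{\mat{\Delta}} \lesssim \kappa u$, and invoking the numerical full-rank hypothesis $\kappa u \ll 1$ from \cref{eq:numerically-full-rank}, a Neumann series (or direct rearrangement) gives $\norm{\fl(\mat{P}^{-1}\vec{z})} \le \norm{\mat{P}^{-1}\vec{z}}/(1 - \norm{\mat{P}^{-1}\mat{\Delta}}) \lesssim \norm{\mat{P}^{-1}\vec{z}}$.

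Combining these ingredients, $\norm{\vec{w}} \le \norm{\mat{\Delta}}\norm{\fl(\mat{P}^{-1}\vec{z})} \lesssim \norm{\mat{A}}\norm{\mat{P}^{-1}\vec{z}}u$, which yields the first claimed identity. For the adjoint statement, I would repeat the argument verbatim with $\mat{P}^\top$ in place of $\mat{P}$, using that $\mat{P}^{-\top}$ is also applied in a backward stable way and that $\norm{\mat{P}^{\top}} = \norm{\mat{P}}\asymp \norm{\mat{A}}$, which produces a perturbation $\mat{\Delta}'$ and a corresponding $\vec{e}'$ of norm $\lesssim u$.
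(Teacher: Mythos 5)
Your proof is correct and takes essentially the same route as the paper: both invoke the backward-stability hypothesis to write $\fl(\mat{P}^{-1}\vec{z}) = (\mat{P}+\mat{\Delta})^{-1}\vec{z}$, factor $\mat{P}^{-1}$ out of the error via a resolvent/perturbation identity, and then bound the remaining factor using $\norm{\mat{P}^{-1}\mat{\Delta}} \lesssim \kappa u \ll 1$ and a Neumann-series estimate. The only cosmetic difference is which side you pull the resolvent factor from ($\Id + \mat{P}^{-1}\mat{\Delta}$ versus $\Id + \mat{\Delta P}\,\mat{P}^{-1}$), which is immaterial.
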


\begin{proof}
    We prove the result for $\mat{P}^{-1}\vec{z}$; the result for $\mat{P}^{-\top}\vec{z}$ is identical.
    By hypothesis, solves $\mat{P}^{-1}\vec{z}$ are computed in a backward stable way:
    \begin{equation} \label{eq:P-backward}
        (\mat{P} + \mat{\Delta P})(\mat{P}^{-1}\vec{z} + \err(\mat{P}^{-1}\vec{z})) = \vec{z} \quad \text{with } \norm{\mat{\Delta P}} \lesssim \norm{\mat{A}}u.
    \end{equation}
    Here, we have used \cref{eq:AP-norms} to replace $\norm{\mat{P}}$ with $\norm{\mat{A}}$.
    Rearranging yields
    \begin{equation*}
        \err(\mat{P}^{-1}\vec{z})) = \mat{P}^{-1}\vec{w} \quad \text{for } \vec{w} = (\Id + \mat{\Delta P}\cdot \mat{P}^{-1})^{-1} (-\mat{\Delta P}) (\mat{P}^{-1}\vec{z}).
    \end{equation*}
    Establishing the bound requires proving $\norm{\vec{w}} \lesssim \norm{\mat{A}}\norm{\smash{\mat{P}^{-1}\vec{z}}}u$.
    To do so, apply \cref{eq:AP-norms,eq:P-backward} to bound
    \begin{equation*}
        \norm{\smash{\mat{\Delta P} \cdot \mat{P}^{-1}}} \le \norm{\mat{\Delta P}}\norm{\smash{\mat{P}^{-1}}} \lesssim \kappa u \ll 1 \quad \text{so } \norm{\smash{(\Id + \mat{\Delta P}\cdot \mat{P}^{-1})^{-1}}} \le 3/2.
    \end{equation*}
    Using this result and \cref{eq:P-backward} yields
    \begin{equation*}
        \norm{\vec{w}} \le \norm{\smash{(\Id + \mat{\Delta P}\cdot \mat{P}^{-1})^{-1}}} \cdot \norm{-\mat{\Delta P}} \cdot \norm{\smash{\mat{P}^{-1}\vec{z}}} \lesssim \norm{\mat{A}}\norm{\smash{\mat{P}^{-1}\vec{z}}}u.
    \end{equation*}
    The desired result is proven.
\end{proof}

With this result in hand, we prove \cref{lem:interleaved}.

\begin{proof}[Proof of \cref{lem:interleaved}]
    We establish the result for $\mat{P}^{-\top}\mat{A}^\top\vec{z}$  first.
    By \cref{fact:basic-stability},  
    \begin{equation} \label{eq:ATz-err}
        \big\|\smash{\err(\mat{A}^\top\vec{z})}\big\|\lesssim \norm{\mat{A}}\norm{\vec{z}}u.
    \end{equation}
    By \cref{prop:backward}, we have
    \begin{equation} \label{eq:err-PTATz}
        \err(\mat{P}^{-\top}\mat{A}^\top\vec{z}) = \mat{P}^{-\top}\err(\mat{A}^\top\vec{z}) + \norm{\mat{A}} \big\| \mat{P}^{-\top}\fl(\mat{A}^\top\vec{z})\big\| \cdot \mat{P}^{-\top}\vec{e}_1.
    \end{equation}
    The first term accounts for the action of $\mat{P}^{-\top}$ on the errors produced when computing $\mat{A}^\top\vec{z}$, and the second term accounts for the errors in evaluating $\mat{P}^{-\top}\fl(\mat{A}^\top\vec{z})$.
    First we bound $\big\| \mat{P}^{-\top}\fl(\mat{A}^\top\vec{z})\big\|$:
    \begin{align*}
        \big\| \mat{P}^{-\top}\fl(\mat{A}^\top\vec{z})\big\| &\le \big\| \mat{P}^{-\top}\mat{A}^\top\vec{z}\big\| +  \big\| \mat{P}^{-\top}\err(\mat{A}^\top\vec{z})\big\| \\&\lesssim \big\| \mat{P}^{-\top}\mat{A}^\top\big\|\norm{\vec{z}} + \big\| \mat{P}^{-\top}\big\| \norm{\mat{A}}\norm{\vec{z}}u \lesssim \norm{\vec{z}}.
    \end{align*}
    The second inequality is \cref{eq:ATz-err} and the submultiplicative property.
    For the third inequality, we note that first term is $\lesssim \norm{\vec{z}}$ by \cref{eq:com-scale} and the second term is $\lesssim \kappa u \norm{\vec{z}} \lesssim \norm{\vec{z}}$ by \cref{eq:AP-norms,eq:numerically-full-rank}.
    The bound $\bigl\|\err(\mat{P}^{-\top}\mat{A}^\top\vec{z})\bigr\| \lesssim \kappa u \norm{\vec{z}}$ now follows by taking the norm of \cref{eq:err-PTATz}, invoking the triangle inequality and submultiplicative properties, and using \cref{eq:AP-norms}.
    We have established the first estimate in \cref{eq:interleaved}.

    A similar argument shows
    \begin{equation} \label{eq:err-APz}
        \|{\err(\mat{A}\mat{P}^{-1}\vec{z})}\| \lesssim \kappa u \norm{\vec{z}}.
    \end{equation}
    Now, we chain the two bounds together.
    First, obtain the norm bound:
    \begin{equation}\label{eq:norm-APz}
        \|{\fl(\mat{A}\mat{P}^{-1}\vec{z})}\| \le \|\mat{A}\mat{P}^{-1}\vec{z}\| + \|{\err(\mat{A}\mat{P}^{-1}\vec{z})}\| \lesssim \norm{\vec{z}} + \kappa u \norm{\vec{z}} \lesssim \norm{\vec{z}}.
    \end{equation}
    The second inequality is \cref{eq:com-scale,eq:err-APz}, and the third inequality is the hypothesis $\kappa u \ll 1$.
    By the first estimate in \cref{eq:interleaved}, we have
    \begin{equation*}
        \big\| {\err(\mat{P}^{-\top}\mat{A}^\top\mat{A}\mat{P}^{-1}\vec{z})} \big\| \lesssim \big\| \mat{A}^\top\mat{P}^{-\top} \err(\mat{A}\mat{P}^{-1}\vec{z})\big\| + \kappa u \|{\fl(\mat{A}\mat{P}^{-1}\vec{z})}\|.
    \end{equation*}
    Invoking the submultiplicative property, \cref{eq:com-scale}, \cref{eq:err-APz}, and \cref{eq:norm-APz} yields the second estimate in \cref{eq:interleaved}.
\end{proof}

\subsection{Proof of \cref{lem:error-formula}} \label{sec:proof-error-formula}

For notational convenience, the functions $\err$, $\fl$, etc.\ report only the numerical errors computed during the $i$th step.
Denote the residual $\vec{r}_i \coloneqq \vec{b} - \mat{A}\vec{\hat{x}}_i$.
By \cref{fact:basic-stability}, $\vec{r}_i$ is computed up to error
\begin{equation*}
    \norm{\err(\vec{r}_i)} \lesssim (\norm{\vec{r}_i} + \norm{\mat{A}}\norm{\vec{\hat{x}}_i}) u \quad \text{so } \norm{\fl(\vec{r}_i)} \le 2\norm{\vec{r}_i}+ c\norm{\mat{A}}\norm{\vec{\hat{x}}_i} u 
\end{equation*}
for a prefactor $c = p(n,\log(1/u))$.
Therefore, by \cref{lem:interleaved}, the error in $\vec{c}_i$ in Algorithm \ref{alg:meta-algorithm} satisfies
\begin{equation} \label{eq:c-err}
    \begin{split}
    \norm{\err(\vec{c}_i)} &\lesssim \norm{\mat{P}^{-\top}\mat{A}^\top \err(\vec{r}_i)} + \kappa u \norm{\mat{A}}\norm{\fl(\vec{r_i})} \\
    &\lesssim \kappa u \norm{\vec{r}_i} + \norm{\mat{A}}\norm{\vec{\hat{x}}_i}u.
    \end{split}
\end{equation}
In the second line, we invoke the previous display and use the bounds \cref{eq:com-scale,eq:numerically-full-rank}.
Using the hypothesis $\kappa u \ll 1$, the error formula \cref{eq:c-err} leads to the bound
\begin{equation}\label{eq:c-norm}
    \norm{\fl(\vec{c}_i)} \le \norm{\vec{c}_i} +  \norm{\err(\vec{c}_i)} \lesssim \norm{\vec{c}_i} + \kappa u\norm{\vec{r}_i} + \norm{\mat{A}}\norm{\vec{\hat{x}}_i} u .
\end{equation}

Now we turn to the computation of $\vec{\delta y}_i$, defined as
\begin{equation} \label{eq:dy}
    \vec{\delta y}_i = \mat{M}^{-1}\vec{c}_i = \mat{P}\mat{A}^{-1}\vec{r}_i \quad \text{for } \mat{M} \coloneqq \mat{P}^{-\top}\mat{A}^\top\mat{A}\mat{P}^{-1}.
\end{equation}
Under the guarantee \cref{eq:iterative-solver-guarantee} for the \Call{IterativeSolver}{} routine, the numerically computed output $\vec{\hat{\delta y}}_i$ has error
\begin{equation*}
    \norm{\err(\vec{\delta y}_i)} \lesssim \norm{\mat{M}^{-1}\err(\vec{c}_i)} + \kappa u \norm{\fl(\vec{c}_i)}.
\end{equation*}
The first term comes from applying the exact inverse $\mat{M}^{-1}$ to the errors in $\vec{c}_i$, and the second term comes from the inexact nature of the \Call{IterativeSolver}{} routine.
By \cref{eq:good-prec,eq:com-scale}, the matrix $\mat{M}^{-1}$ has norm $\lesssim 1$.
Ergo, using the formulas \cref{eq:c-err,eq:c-norm} and the fact that $\norm{\vec{c}_i} \le K\norm{\vec{r}_i}$ for a constant $K>0$, we obtain
\begin{equation} \label{eq:err-dy}
    \norm{\err(\vec{\delta y}_i)} \lesssim \kappa u \norm{\vec{r}_i} + \norm{\mat{A}}\norm{\vec{\hat{x}}_i}u.
\end{equation}
We have again used \cref{eq:numerically-full-rank} to simplify.

Now we treat the errors in evaluating $\mat{P}^{-1}\vec{\delta y}_i$. 
By \cref{prop:backward}, we have
\begin{equation} \label{eq:Pinvy}
    \err(\mat{P}^{-1}\vec{\delta y}_i) = \mat{P}^{-1} \err(\vec{\delta y}_i) + \norm{\mat{A}}\|\mat{P}^{-1}\fl(\vec{\delta y}_i)\| \cdot \mat{P}^{-1}\vec{e}_1.
\end{equation}
Let us begin by bounding $\norm{\smash{\mat{P}^{-1}\fl(\vec{\delta y}_i)}}$.
Observe that \cref{eq:dy} implies $\mat{P}^{-1}\vec{\delta y}_i = \mat{A}^{-1}(\vec{b} - \mat{A}\vec{\hat{x}}_i) = \vec{x} - \vec{\hat{x}}_i$.
Using the formulas \cref{eq:dy,eq:err-dy}, we may write
\begin{align*}
    \norm{\smash{\mat{P}^{-1}\fl(\vec{\delta y}_i)}} &\le \norm{\vec{x} - \vec{\hat{x}}_i} + \|\mat{P}^{-1}\|\norm{\err(\vec{\delta y}_i)} \lesssim \norm{\vec{x}} + \norm{\vec{x}-\vec{\hat{x}}_i} + \kappa \norm{\vec{r}_i}/\norm{\mat{A}}.
\end{align*}
In the second inequality, we used \cref{eq:AP-norms}, \cref{eq:numerically-full-rank}, and the the triangle inequality.
Consequently, substituting in \cref{eq:Pinvy} and using \cref{eq:err-dy}, we obtain
\begin{equation*}
    \err(\mat{P}^{-1}\vec{\delta y}_i) = (\kappa \norm{\vec{r}_i} + \norm{\mat{A}}\norm{\vec{x}} + \norm{\mat{A}}\norm{\vec{x} - \vec{\hat{x}}_i})\cdot \mat{P}^{-1}\vec{e}_2.
\end{equation*}
We can clean this bound up by noting
\begin{equation*}
    \norm{\mat{A}}\norm{\vec{x} - \vec{\hat{x}}_i} \le \norm{\mat{A}}\|\mat{A}^{-1}\|\norm{\mat{A}(\vec{x} - \vec{\hat{x}}_i)} = \kappa \norm{\vec{r}_i}.
\end{equation*}
Using this bound and invoking \cref{fact:basic-stability} yields
\begin{equation*}
    \vec{x} - \vec{\hat{x}}_{i+1} = (\kappa \norm{\vec{r}_i} + \norm{\mat{A}}\norm{\vec{x}})\cdot\mat{P}^{-1}\vec{e}_3. 
\end{equation*}
Multiplying by $\mat{A}$, taking norms, and using \cref{eq:com-scale} gives the desired result.
\hfill $\proofbox$

\subsection{Proof of \cref{thm:main-meta}} \label{sec:proof-main-meta}

Equation~\cref{eq:r-bound} gives a recurrence for the residual norm with initial condition $\norm{\vec{r}_0} = \norm{\vec{b}} \le \norm{\mat{A}}\norm{\vec{x}}$.
Solving it yields the bound
\begin{equation*}
    \norm{\vec{r}_i} \lesssim (c\kappa u)^i\norm{\mat{A}}\norm{\vec{x}} + \frac{c\norm{\mat{A}}\norm{\vec{x}}u}{1-c\kappa u}.
\end{equation*}
The prefactor $c$ is a polynomial in $n$.
Using the hypothesis $\kappa u \ll 1$ and rearranging gives the claimed bound \cref{eq:meta-error-guarantee}.

To conclude the stated backward stability claim, recall from \cref{eq:rigal-gaches} that the backward error $\mat{\Delta A}_i$ for the $i$th approximate solution $\vec{\hat{x}}_i$ is 
\begin{equation} \label{eq:rigal-gaches-conclusion}
    \frac{\norm{\mat{\Delta A}_i}}{\norm{\mat{A}}} = \frac{\norm{\vec{b} - \mat{A}\vec{\hat{x}}_i}}{\norm{\mat{A}}\norm{\vec{\hat{x}}_i}} \le \frac{\norm{\vec{b} - \mat{A}\vec{\hat{x}}_i}}{\norm{\mat{A}}(\norm{\vec{x}} - \norm{\vec{x}-\vec{\hat{x}}_i})}.
\end{equation}
The inequality holds provided the denominator is positive.
From \cref{eq:meta-error-guarantee}, we have
\begin{equation*}
    \norm{\vec{x}-\vec{\hat{x}}_i} \le \kappa [c'u + (c'\kappa u)^i]\norm{\mat{x}}.
\end{equation*}
The prefactor $c'$ is a polynomial in $n$.
Using the assumption $\kappa u \ll 1$, it follows that after $t = \order(\log(1/u)/\log(1/(\kappa u)))$ iterations, we have 
\begin{equation*}
    \norm{\vec{x}-\vec{\hat{x}}_t} \le \norm{\vec{x}}/2 \quad \text{and} \quad \norm{\vec{b} - \mat{A}\vec{\hat{x}}_t} \lesssim \norm{\mat{A}}\norm{\vec{x}} u.
\end{equation*}
Substituting these bounds into \cref{eq:rigal-gaches-conclusion} establishes the bound $\norm{\mat{\Delta A}_i}/\norm{\mat{A}}\lesssim u$, completing the proof. \hfill $\proofbox$

\subsection{Proof of \cref{thm:main-lanczos}} \label{sec:proof-main-lanczos}

To prove \cref{thm:main-lanczos}, we must establish that the Lanczos method (\cref{alg:lanczos}) produces a solution $\vec{\hat{\delta y}}$ to the system
\begin{equation*}
    \mat{M}\,\vec{\delta y} = \vec{c} \quad \text{with } \mat{M} \coloneqq \mat{P}^{\-\top}\mat{A}^\top\mat{A}\mat{P}^{-1}
\end{equation*}
satisfying $\norm{\smash{\vec{\hat{\delta y}} - \vec{\delta y}}} \lesssim \kappa u \norm{\vec{c}}$.
We shall use the symbol $\mat{M}$ throughout this subsection.
Recall from \cref{eq:good-prec,eq:com-scale} that $\norm{\mat{M}} \asymp 1$ and $\cond(\mat{M}) \le C^2$.

The first step will be to reinterpret \cref{eq:interleaved} from \cref{lem:interleaved} in the following way:
\actionbox{Computing $\mat{M}\vec{z}$ as $\mat{P}^{\-\top}(\mat{A}^\top(\mat{A}(\mat{P}^{-1}\vec{z})))$ attains the standard stability guarantee for matrix multiplication by $\mat{M}$ (\cref{fact:basic-stability}), but with a \emph{lower precision}:
\begin{equation}\label{eq:lanczos-apply}
    \norm{\fl(\mat{P}^{\-\top}(\mat{A}^\top(\mat{A}(\mat{P}^{-1}\vec{z})))) - \mat{M}\vec{z}} \lesssim \norm{\mat{M}} \norm{\vec{z}} \tilde{u} \quad \text{with $\tilde{u} \lesssim \kappa u$.}
\end{equation}}
\noindent This observation shows that the rounding errors incurred by executing the Lanczos procedure on the matrix $\mat{M}$ represented implicitly as the product $\mat{M} = \mat{P}^{\-\top}\mat{A}^\top\mat{A}\mat{P}^{-1}$ are no worse worse than executing the Lanczos procedure on the directly stored matrix $\mat{M}$ in a lower precision $\tilde{u} \lesssim \kappa u$.
We have the following result:

\begin{fact}[Lanczos: Well-conditioned input] \label{fact:lanczos}
    Assume $\mat{M}$ is well-conditioned, $\cond(\mat{M}) \le C^2$ for an absolute constant $C > 0$, and assume that \Call{Apply}{} satisfies
    \begin{equation} \label{eq:lanczos-hypothesis}
        \norm{\fl(\Call{Apply}{\vec{z}}) - \mat{M}\vec{z}} \lesssim \norm{\mat{M}}\norm{\vec{z}} \tilde{u} \quad \text{for all }\vec{z} \in \real^n
    \end{equation}
    for some effective precision $\tilde{u} > 0$.
    The Lanczos algorithm (\cref{alg:lanczos}) run for $q = \order(\log(1/\tilde{u}))$ iterations produces an output $\vec{\hat{y}}$ satisfying
    \begin{equation} \label{eq:lanczos-conclusion}
        \norm{\mat{M}} \cdot \norm{\vec{\hat{y}} - \mat{M}^{-1}\vec{c}} \le p(\log(1/\tilde{u})) \norm{\vec{c}}\tilde{u}
    \end{equation}
    for a polynomial $p$.
\end{fact}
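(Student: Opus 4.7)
The plan is to reduce to the classical finite-precision analysis of Lanczos/CG for well-conditioned SPD systems. First, I would reinterpret the hypothesis \cref{eq:lanczos-hypothesis} as saying that \cref{alg:lanczos} can be analyzed as \emph{exact} Lanczos with a single uniform effective unit roundoff $\tilde{u} \ge u$: the matvec with $\mat{M}$ contributes relative error $\tilde{u}$ by assumption, while all other operations (vector AXPYs, inner products, norms, and the small tridiagonal solve on the final line) are performed in the original precision $u \le \tilde{u}$. Errors from the latter contribute at most a polynomial-in-$q$ penalty that is absorbed into the final prefactor $p(\log(1/\tilde{u}))$.

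Second, I would invoke the finite-precision analysis of Lanczos/CG due to Greenbaum and collaborators, which asserts that the $q$-th iterate $\vec{\hat{y}}_q$ satisfies an $\mat{M}$-norm error bound of the form
\begin{equation*}
    \norm{\vec{\hat{y}}_q - \mat{M}^{-1}\vec{c}}_{\mat{M}} \le p_1(q)\,\varrho^q \norm{\mat{M}^{-1}\vec{c}}_{\mat{M}} + p_2(q)\,\cond(\mat{M})\norm{\mat{M}^{-1}\vec{c}}_{\mat{M}}\,\tilde{u},
\end{equation*}
where $\varrho = (\sqrt{\cond(\mat{M})}-1)/(\sqrt{\cond(\mat{M})}+1) < 1$ and $p_1,p_2$ are low-degree polynomials. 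The first term reflects the exact-arithmetic geometric convergence rate of CG on a well-conditioned problem, and the second term is the finite-precision ``noise floor''.

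Third, the hypothesis $\cond(\mat{M}) \le C^2$ makes $\varrho$ bounded away from $1$ by an absolute constant, so choosing $q = \order(\log(1/\tilde{u}))$ drives $\varrho^q \le \tilde{u}$. The error bound then collapses to $\norm{\vec{\hat{y}}_q - \mat{M}^{-1}\vec{c}}_{\mat{M}} \lesssim p(\log(1/\tilde{u}))\,\tilde{u}\,\norm{\mat{M}^{-1}\vec{c}}_{\mat{M}}$. Converting to the $2$-norm using the equivalences $\sqrt{\lambda_{\min}(\mat{M})}\norm{\vec{v}} \le \norm{\vec{v}}_{\mat{M}} \le \sqrt{\norm{\mat{M}}}\norm{\vec{v}}$, bounding $\norm{\mat{M}^{-1}\vec{c}}_{\mat{M}} \le \norm{\vec{c}}/\sqrt{\lambda_{\min}(\mat{M})}$, and multiplying through by $\norm{\mat{M}}$ yields \cref{eq:lanczos-conclusion}; the $\cond(\mat{M})$ factors introduced by these conversions are absorbed by the hypothesis $\cond(\mat{M}) \le C^2$.

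The hard part will be pinning down a finite-precision Lanczos/CG bound with exactly the right structure: a geometric decay term at the exact-arithmetic rate \emph{plus} a noise floor that is linear in $\tilde{u}$ (rather than some worse power of $\tilde{u}$ or $\cond(\mat{M})$). For well-conditioned matrices this is the best-understood regime in the finite-precision Lanczos literature, since the loss of orthogonality remains mild and no delayed-convergence or ``ghost eigenvalue'' effects intervene before the method converges. If a clean off-the-shelf statement is not available in the desired form, the fallback is to work directly from Paige's three-term recurrence identity $\mat{M}\mat{Q}_k = \mat{Q}_k\mat{T}_k + \beta_{k+1}\vec{q}_{k+1}\vec{e}_k^\top + \mat{F}_k$ with $\norm{\mat{F}_k} \lesssim \tilde{u}\,\norm{\mat{M}}$, combined with a bound on the loss of orthogonality $\norm{\mat{Q}_k^\top\mat{Q}_k - \Id}$, to bound the forward error in the computed $\gamma\,\mat{Q}_k\mat{T}_k^{-1}\vec{e}_1$ directly; the well-conditioning of $\mat{M}$ again makes the requisite bounds clean.
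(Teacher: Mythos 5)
The paper does not prove this Fact itself: it is imported from Musco, Musco, and Sidford \cite[sec.~6]{MMS18}, with the translation to the stated form handled in \cite[App.~C]{EMN24}. The MMS18 route is a polynomial-approximation stability argument---finite-precision Lanczos applied to $\mat{M}$ evaluates $p(\mat{M})\vec{c}$ stably for any polynomial $p$ bounded on a slightly inflated spectral interval, and combining this with a Chebyshev approximant to $x\mapsto 1/x$ on that interval yields geometric decay plus a linear-in-$\tilde{u}$ floor. Your proposal instead reaches for the Greenbaum backward-error program (finite-precision Lanczos behaves like exact Lanczos on a perturbed matrix with slightly spread eigenvalues) stitched to an attainable-accuracy bound. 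That is a genuinely different line of attack from the paper's. Both can in principle reach the conclusion, and your reduction-to-effective-precision observation, the remark that well-conditioning of $\mat{M}$ bounds $\varrho$ away from $1$, and the $\mat{M}$-norm to $2$-norm conversions are all fine.

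There is, however, a concrete gap. Your middle step invokes an off-the-shelf bound of the form
\[
\norm{\vec{\hat{y}}_q - \mat{M}^{-1}\vec{c}}_{\mat{M}} \le p_1(q)\,\varrho^q \norm{\mat{M}^{-1}\vec{c}}_{\mat{M}} + p_2(q)\,\cond(\mat{M})\,\tilde{u}\,\norm{\mat{M}^{-1}\vec{c}}_{\mat{M}},
\]
without citation, and no such statement exists in this form for the specific variant in \cref{alg:lanczos}, which reconstructs $\vec{y} \gets \gamma\,\mat{Q}(\mat{T}^{-1}\mathbf{e}_1)$ from the stored basis. Greenbaum's backward-error theorem controls $\mat{T}$ and its Ritz values; the Sleijpen--van~der~Vorst residual-gap floor controls the CG \emph{recursion}, which maintains an explicit residual that \cref{alg:lanczos} does not. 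Welding those two together for the basis-combination variant, and in particular controlling the amplification of rounding error through $\mat{T}^{-1}$ and through a $\mat{Q}$ whose columns have lost orthogonality, is precisely the delicate point the paper flags: it notes that \cite{DK92,DGK98} obtain sharper constants only by assuming line~11 of \cref{alg:lanczos} is exact. Saying those operations are carried out at precision $u \le \tilde{u}$ is not enough; one must show the subsequent amplification stays polynomial in $q$. Your stated fallback---work directly from Paige's recurrence $\mat{M}\mat{Q}_k = \mat{Q}_k\mat{T}_k + \beta_{k+1}\vec{q}_{k+1}\vec{e}_k^\top + \mat{F}_k$ together with a loss-of-orthogonality bound---is in fact the shape the missing argument would need to take, but as written it is a sketch, not a proof, and the paper's deference to \cite{MMS18,EMN24} exists precisely because filling it in requires real work.
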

This result essentially appears in \cite[sec.~6]{MMS18}; the derivation of the version of this result statement here is discussed in \cite[app.~C]{EMN24}.
We note that while the paper \cite{MMS18} establishes the \emph{qualitative} conclusion \cref{eq:lanczos-conclusion} that we want, other analyses \cite{DK92,DGK98} of the Lanczos procedure achieve smaller prefactors but assume that line 11 of \cref{alg:lanczos} is executed without error.
(This assumption should be straightforward to remove with some work.)
The techniques of \cite{MMS18} yield this result with $p(\log(1/\tilde{u})) = \order(\log^4(1/\tilde{u}))$; in ongoing work of the first two authors, Deeksha Adil, and Christopher Musco, we have improved the prefactor to $p(\log(1/\tilde{u})) = \order(\log^{1.5}(1/\tilde{u}))$.

We emphasize that this result holds \emph{only} when the matrix $\mat{M}$ is well-conditioned, and the existing results exhibit a poor dependence on the condition number of $\mat{M}$.
This existing analysis of the Lanczos method, with all its prefactors, appear to overestimate the errors that the Lanczos method incurs in practice by orders of magnitude, and---were you to carry out a sharp version of our argument tracking the prefactors---our bounds could be quantitatively uninteresting for practical values of $u$, $\kappa$, $C$, and $n$.
We are optimistic that, should a fuller and more quantitatively sharp understanding of the Lanczos method in finite precision be developed, plugging such new analysis into our bounds would give more revealing answers.

With this result in hand, we can prove \cref{thm:main-lanczos}.

\begin{proof}[Proof of \cref{thm:main-lanczos}]
    By \cref{eq:lanczos-apply}, the apply operation $\vec{z} \mapsto \mat{P}^{-\top}(\mat{A}^\top(\mat{A}(\mat{P}^{-1}\vec{z})))$ satisfies the hypothesis \cref{eq:lanczos-hypothesis} of \cref{fact:lanczos}.
    Ergo, the conclusion \cref{eq:lanczos-conclusion} holds, from whence the Lanczos method satisfies the necessary condition \cref{eq:iterative-solver-guarantee} for the \Call{IterativeSolver}{} method required to invoke \cref{thm:main-meta}.
    Invoking \cref{thm:main-meta} completes the proof.
\end{proof}

\section*{Acknowledgments}
We thank Deeksha Adil, Maike Meier, Lorenzo Lazzarino, Christopher Musco, Joel Tropp, and Umberto Zerbinati for valuable discussions.

\appendix

\section{Extra details for experiments} \label{sec:experiment-details}
Experiments are performed in MATLAB R2023b on a Macbook Pro with an M3 Pro chip and 18 GB of unified memory.
With the exception of the large example in the right panel of \cref{fig:pd}, residuals for all experiments are computed in higher precision using the \texttt{vpa} command in MATLAB with 24 digits.

\paragraph{\Cref{fig:lsqr,fig:left-lsqr,fig:auto-lsqr-ir}}
Test matrices $\mat{A} = \mat{U}\mat{\Sigma}\mat{V}^\top$ of dimension $n=1000$ are constructed by generating Haar-random orthogonal matrices and a diagonal matrix with logarithmically spaced entries $\mat{\Sigma} = \diag(10^{-\alpha(i-1)})$.
The preconditioner is given by $\mat{P}^{-1} = \mat{V}\mat{\tilde{\Sigma}}$ (\cref{fig:lsqr,fig:auto-lsqr-ir}) or $\mat{P}^{-1} = \smash{\mat{\tilde{\Sigma}}\mat{U}^\top}$ (\cref{fig:left-lsqr}), where $\mat{\tilde{\Sigma}}$ is a diagonal matrix constructed so that the entries of $\smash{\mat{\tilde{\Sigma}}}^{-1}\mat{\Sigma} = \diag(1 + \beta(i-1))$ are equally spaced.
The constants $\alpha,\beta$ is chosen to give the specified condition numbers for $\mat{A}$ and $\mat{A}\mat{P}^{-1}$ or $\mat{P}^{-1}\mat{A}$.
The inverse preconditioner $\mat{P}^{-1}$ is stored as a dense array.

\paragraph{\Cref{fig:pd}}
For the left panel of \cref{fig:pd}, we generate a random matrix $\mat{A} = \mat{U}\mat{\Sigma}\mat{U}^\top$ of dimension $n=1000$, where $\mat{U}$ is a Haar-random orthogonal matrix and $\mat{\Sigma} = \diag(10^{-14(i-1)/(n-1)}$ has logarithmically spaced eigenvalues between $10^{-14}$ and $10^0$.
We generate a preconditioner $\mat{P} = \mat{A} = \smash{\mat{U}\mat{\Sigma}^{1/2}\mat{W}\mat{\Sigma}^{1/2}\mat{U}^\top}$ by introducing a Wishart matrix $\mat{W} = \mat{G}^\top\mat{G}$, constructed from a $4n\times n$ matrix $\mat{G}$ with independent standard Gaussian matrix.
This is an artificial preconditioner designed to illustrate the behavior of preconditioned CG.
The matrix $\mat{P}$ is stored as a dense array, its Cholesky factorization is computed $\mat{P} = \mat{R}^\top \mat{R}$, and the inverse-preconditioner $\mat{P}^{-1}\vec{y} = \mat{R}^{-\top}(\mat{R}^{-1}\vec{y})$ is applied using triangular substitution.

In the right panel of \cref{fig:pd}, we generate a square-exponential kernel matrix $\mat{K} = (\exp(-\norm{\vec{x}_i - \vec{x}_j}^2/2))_{i,j=1}^n$ from a uniformly random subsample of $n=10^4$ points from the \texttt{COMET\_MC\_SAMPLE} dataset from the LIBSVM repository \cite{chang2011libsvm}.
Data is standardized so that each feature has mean-zero and variance one.
We solve the \emph{regularized} linear system $\mat{A}\vec{x} = \vec{b}$ for $\mat{A} = \mat{K} + \lambda\Id$ and $\lambda = 10^{-10}$.
To construct a preconditioner, we run the randomly pivoted Cholesky algorithm \cite{CETW25} for $k = 850$ steps to obtain a low-rank approximation $\mat{F}\mat{F}^\top \approx \mat{A}$ with $\mat{F} \in \real^{n\times k}$.
We utilize the Nystr\"om preconditioner \cite{FTU23,DEF+23} $\mat{P} = \mat{F}\mat{F}^\top + \lambda \Id$, which we apply via the formula
\begin{equation*}
    \mat{P}^{-1}\vec{y} = \mat{U} ( (\mat{\Sigma}^2 + \lambda \Id)^{-1}(\mat{U}^\top\vec{y})) + \frac{1}{\lambda}(\vec{y} - \mat{U}(\mat{U}^\top\vec{y})),
\end{equation*}
where $\mat{F} = \mat{U}\mat{\Sigma}\mat{V}^\top$ is a economy-size SVD.
Theoretical analysis and extensive testing for this \emph{randomly pivoted Cholesky preconditioning} approach are provided in \cite{DEF+23}.

\section{A critical analysis of GMRES's strengths} \label{sec:gmres-strengths}
There are a number of reasons for GMRES's popularity over LSQR for nonsymmetric linear systems.
In this section, we review the benefits of GMRES, providing somewhat of a counterpoint to the pro-LSQR argument made in \cref{sec:lsqr}.

\myparagraph{1.\ Adjoint-free.}
The (P)GMRES method requires only multiplications with the matrix $\mat{A}\vec{z}$ and inverse-preconditioner $\mat{P}^{-1}\vec{z}$.
LSQR also requires the adjoint operations $\mat{A}^\top\vec{z}$ and inverse-preconditioner $\mat{P}^{-\top}\vec{z}$.
For applications where adjoint operations are not available, GMRES may be the only choice.

\myparagraph{2.\ Numerical effects.}
GMRES is widely believed to have excellent stability properties.
Indeed, both the Householder \cite{DGRS95} and the modified Gram--Schmidt \cite{GRS97} versions of GMRES have been shown to be backward stable.
By contrast, there seem to be lingering concerns about the numerical properties of LSQR in finite precision arithmetic, perhaps based on its connection to the normal equations.
As this paper demonstrates, this skepticism is, in some ways, well-founded, but is easily cured using iterative refinement (at least in the most practically useful case in which one has a good preconditioner).
Conversely, we are not aware of any analysis that demonstrates that \emph{preconditioned} GMRES produces backward stable solutions in a small number of iterations.

Another virtue of GMRES is that, because it orthonormalizes each new Krylov basis vector against all previous basis vectors, it can attain more rapid convergence in finite precision than methods like CG and LSQR based on three term recurrences.
See the very recent paper \cite[Fig.~2]{DNRY25} for striking examples of this effect, where GMRES outperforms CG by significant measures.
These issues with CG and LSQR can be cured by implementing these methods with full or partial reorthogonalization, though they bring the storage and arithmetic costs of CG and LSQR closer to GMRES.

\myparagraph{3.\ Convergence rate.}
As is demonstrated in the left and right panels of \cref{fig:gmres-lsqr}, when the eigenvalues of GMRES are real and clustered and the eigenvectors are well-conditioned, GMRES converges at the ``CG rate'' of roughly $\e^{-i/\sqrt{\cond(\mat{A}\mat{P}^{-1})}}$, whereas LSQR converges at the slower rate  $\e^{-i/\cond(\mat{A}\mat{P}^{-1})}$.
Thus, \emph{provided one can cluster the eigenvalues and maintain well-conditioned eigenvectors}, GMRES converges faster and is more robust to a middling-quality preconditioner than LSQR is.

Another class of examples where GMRES \emph{often} achieves better convergence than LSQR is when the eigenvalues are clustered but the eigenvectors are ill-conditioned.
In principle, the bound \cref{eq:gmres-conv} suggests GMRES should also be negatively affected by ill-conditioned eigenvectors, but there are examples where (P)GMRES does well in spite of this \cite[ex.~2]{Wat22}; see \cite[sec.~3]{NRT92} for a partial theoretical explanation.

\bibliographystyle{siamplain}
\bibliography{short}

\end{document}